\newtheorem{theorem}{Theorem}[section]
\newtheorem{lemma}[theorem]{Lemma}
\newtheorem{proposition}[theorem]{Proposition}
\theoremstyle{definition}
\newtheorem{definition}{Definition}
\newtheorem{remark}{Remark}
\newtheorem*{acknow}{Acknowledgments}
\newtheorem{example}{Example}
\newcommand{\R}{\mathbb{R}}
\newcommand{\C}{\mathbb{C}}
\newcommand{\D}{\mathbb{D}}
\newcommand{\St}{\mathbb{S}}
\newcommand{\F}{\mathbb{F}}
\newcommand{\Z}{\mathbb{Z}}
\begin{document}

\title[explicit Iwasawa for minimal Lagrangian surfaces]{Explicit expressions for the Iwasawa factors, the metric and the monodromy matrices for minimal Lagrangian surfaces in $\C P^2$}
\author{Josef F.~Dorfmeister}
\address{Fakult\"{a}t F\"{u}r Mathematik, TU-M\"{u}chen, Boltzmann Str. 3,
D-85747, Garching, Germany}
\email{dorfm@ma.tum.de}
\author{Hui Ma}
\address{Department of Mathematical Sciences, Tsinghua University,
Beijing 100084, P.R. China} \email{hma@math.tsinghua.edu.cn}
\thanks{The second author is partially supported by NSFC grant
No.~11271213.}

\maketitle

\begin{abstract} 
In this paper we continue our study of equivariant minimal Lagrangian surfaces in $\C P^2$, characterizing the rotationally equivariant cases and providing explicit formulae for relevant geometric quantities of translationally equivariant minimal Lagrangian surfaces in terms of Weierstrass elliptic functions.
\end{abstract}

\section{Introduction}

The study of minimal Lagrangian surfaces in the complex projective plane is an interesting subject from the point of view of differential geometry, mathematical physics and integrable systems theory. This paper is a continuation of \cite{DorfMa1} on the discussion of this subject via the loop group method.

The paper is organized as follows: in Section \ref{Sec:setup}, we recall the basic setup for minimal Lagrangian surfaces in $\C P^2$. In Section \ref{sec:vacuum},  we obtain that any vacuum can be deformed to the potential of the Clifford torus by an isometric  transformation and a coordinate change. In Section \ref{Sec:equiv}, we characterize the rotationally equivariant minimal Lagrangian surfaces in $\C P^2$. In Section \ref{Sec:trans}, we present the details of the computation for the Iwasawa decomposition of translationally equivariant minimal Lagrangian surfaces and we also give explicit solutions for the metrics and the associated family of immersions of such surfaces in terms of the Weierstrass $\wp-$functions. In Section \ref{Sec:quantities}, we provide explicit formulae for relevant geometric quantities of translationally equivariant minimal Lagrangian surfaces in terms of Weierstrass elliptic functions. 
In Section \ref{Sec:Hom}, we present a quite direct  classification of homogeneous minimal Lagrangian surfaces into $\C P^2$ by using the loop group method. 

\section{Basic setup of minimal Lagrangian surfaces in $\mathbb{C}P^2$}
\label{Sec:setup}

We recall briefly the basic set-up for minimal Lagrangian surfaces in $\mathbb{C}P^2$. For details we refer to \cite{MM, DorfMa1} and references therein.

Let $\mathbb CP^2$ be the complex projective plane endowed with the Fubini-Study metric  and $f:M\rightarrow {\mathbb C}P^2$ be a minimal Lagrangian immersion of an oriented surface. The induced metric on $M$ generates a conformal structure with
respect to which the metric is $g=2e^{u} dzd{\bar z}$,  
where $z=x+iy$ is a local conformal coordinate on $M$ and $u$ is a real-valued function defined on $M$ locally.
For any Lagrangian immersion $f$, there exists a local horizontal lift $F: {\rm U}\rightarrow S^5(1)$.  
We therefore have
\begin{equation*}\label{horizontal+conformal}
\begin{split}
& F_z \cdot {\overline F}=F_{\bar z}\cdot {\overline F} =0, \\
& F_z\cdot \overline{F_z}=F_{\bar z}\cdot \overline{F_{\bar z}}=e^{u},\quad 
 F_z\cdot \overline{F_{\bar z}}=0.
\end{split}
\end{equation*}
Thus 
$\mathcal{F}=(e^{-\frac{u}{2}}F_z, e^{-\frac{u}{2}}F_{\bar z}, F)$ is a Hermitian orthonormal moving frame 
globally defined on the universal cover of $M$.
Set
\begin{equation*}
\psi=F_{zz}\cdot\overline{F_{\bar z}}.
\end{equation*}
Then the cubic differential $\Psi=\psi dz^3$ is globally defined on $M$  and
independent of the choice of the local lift, which is called the Hopf differential of $f$.
One can obtain the Gauss-Codazzi equations of a minimal Lagrangian surface given by 
\begin{eqnarray}
u_{z\bar z}+e^u-e^{-2u}|\psi|^2&=0,\label{eq:mLsurfaces}\\
\psi_{\bar z}&=0. \label{eq:Codazzi}
\end{eqnarray}

We will need the following loop group decomposition.
\begin{theorem}[Iwasawa Decomposition theorem of $\Lambda SL(3,\mathbb{C})_{\sigma}$]\label{Thm:Iwasawa}
The multiplication map $\Lambda SU(3)_{\sigma}\times \Lambda^{+} SL(3, \C)_{\sigma} \rightarrow \Lambda SL(3, \C)_{\sigma}$ is 
surjective. Explicitly, 
every element $g\in \Lambda SL(3,\mathbb{C})_{\sigma}$ can be represented in the form
$g=h V_{+}$ with $h\in \Lambda SU(3)_{\sigma}$ and $V_{+}\in \Lambda^{+} SL(3,\mathbb{C})_{\sigma}$.
One can assume without loss of generality that $V_{+} (\lambda=0)$ has only positive diagonal entries.
In this case the decomposition is unique. 
\end{theorem}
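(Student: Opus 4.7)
The plan is to establish the theorem in two stages: first prove the analogous factorization for the untwisted loop group $\Lambda SL(3,\C)$, and then descend to the $\sigma$-invariant subgroup by a uniqueness argument.

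For the untwisted factorization I would follow the Pressley--Segal approach. Pointwise in $\lambda \in S^1$ one has the classical finite-dimensional Iwasawa decomposition $SL(3,\C)=SU(3)\cdot B^+$ via Gram--Schmidt, so the real task is to carry this out in $\lambda$-families in such a way that the upper-triangular factor extends holomorphically across $|\lambda|\le 1$. The standard route combines a Birkhoff factorization $g = g_- g_+$ on an open dense cell with a pointwise Gauss/triangular factorization, and then verifies that the ``Iwasawa cell'' actually exhausts the loop group by an openness and connectedness argument (in contrast to the Birkhoff case, there is no nontrivial stratification). All of this lives in a chosen Banach category of loops (Wiener, Sobolev $H^s$ with $s>1/2$, or smooth), and the factors inherit the regularity of $g$.

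For uniqueness, suppose $hV_+ = h'V_+'$ and set $k := (h')^{-1}h = V_+'V_+^{-1}$, so that $k \in \Lambda SU(3) \cap \Lambda^+ SL(3,\C)$. Then $k(\lambda)$ is holomorphic in $|\lambda|\le 1$ and unitary on $|\lambda|=1$. Schwarz reflection $k(\lambda) = (k(1/\bar\lambda)^*)^{-1}$ extends $k$ to an entire matrix-valued function on $\C^*$ that is uniformly bounded, so a Liouville/maximum-modulus argument forces $k$ to be a constant loop with value in $SU(3)\cap B^+$, i.e., a diagonal unitary. The normalization that $V_+(0)$ have positive diagonal then pins down $k = I$.

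To pass to the $\sigma$-twisted statement, let $g \in \Lambda SL(3,\C)_\sigma$ and write $g = hV_+$ by the untwisted theorem. The twisting automorphism preserves each of $\Lambda SU(3)$, $\Lambda^+ SL(3,\C)$, and the positive-diagonal normalization at $\lambda=0$; applying it to $g = hV_+$ and invoking the uniqueness just established forces both factors to be $\sigma$-invariant, placing them in the required twisted loop groups. The main obstacle, as always in loop group factorization, is the surjectivity in the untwisted step: one must control the regularity in $\lambda$ of the pointwise Gram--Schmidt output and show that the leading principal minors of the upper-triangular factor have no zeros in $|\lambda|<1$, which is exactly what the Pressley--Segal stratification and the associated contractibility arguments accomplish.
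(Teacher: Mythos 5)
The paper does not prove this theorem at all: it is quoted as a standard background result of loop group theory (the untwisted case is in Pressley--Segal, and the $\sigma$-twisted version used here goes back to Dorfmeister--Pedit--Wu \cite{DPW} and Kellersch), so there is no in-paper argument to compare against. Your outline is precisely that standard proof and is correct as a sketch: untwisted global Iwasawa splitting $\Lambda SL(3,\C)=\Lambda SU(3)\cdot\Lambda^{+}SL(3,\C)$ (with the surjectivity delegated to the Pressley--Segal Grassmannian/one-cell argument, which is where all the real work sits), uniqueness by the Schwarz-reflection--Liouville argument showing $\Lambda SU(3)\cap\Lambda^{+}SL(3,\C)$ consists of constant unitary loops compatible with the normalization, and descent to the twisted subgroups by applying the twisting automorphism $\hat\sigma$ to a normalized factorization and invoking uniqueness. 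Two twist-specific points are worth making explicit if you were to write this out: first, in the $\sigma$-twisted setting of this paper the $\lambda^{0}$-coefficient lies in $\mathcal{G}_0$, which is the diagonal torus, so $V_{+}(0)$ is automatically diagonal; this is exactly what makes the ``positive diagonal entries'' normalization meaningful and lets your constant intersection element be pinned to $I$ (in the untwisted group $V_+(0)$ need not be triangular unless one builds a Borel condition into the definition of $\Lambda^{+}$, so be careful which convention you are quoting). Second, the descent step needs the observation that $\hat\sigma$ not only preserves $\Lambda SU(3)$ and $\Lambda^{+}SL(3,\C)$ but also carries a $V_+$ with positive diagonal at $\lambda=0$ to one of the same kind, since $\sigma$ acts on the diagonal torus by permutation and inversion of entries; with that checked, uniqueness forces both factors to be $\sigma$-twisted, as you say.
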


\begin{example} For the Clifford torus $f: \mathbb{C}\rightarrow \mathbb{C}P^2$, we have a horizontal lift $F:\mathbb{C}\rightarrow S^5(1)$ as follows
$$F(z,\bar{z})=\frac{1}{\sqrt{3}} (e^{z-\bar{z}}, e^{\alpha z-\alpha^2 \bar{z}}, e^{\alpha^2 z-\alpha \bar{z}}),$$
where $\alpha=e^{\frac{2}{3}\pi i}$.
It is easy to see that $\psi=F_{zz}\cdot\overline{F_{\bar z}}=-1$ and $e^u=1$.
Then it follows from Wu's formula in \cite{DorfMa0}  that the normalized potential of the Clifford torus is given by
\begin{equation*}
\eta=\lambda^{-1}\begin{pmatrix}
0&0&i\\
i&0&0\\
0&i&0
\end{pmatrix} dz.
\end{equation*}

We write $\eta = \lambda^{-1} A dz$ and verify 
$[A,\tau(A)]=0$. Therefore the solution to $dC = C \eta, C(0,\lambda=1) =I$ is given by $C(z,\lambda) = \exp (z \lambda^{-1} A)$.
Moreover,  we can perform the Iwasawa decomposition directly  and obtain for the extended frame the expression
$\F(z,\lambda) = \exp( z \lambda^{-1} A + \bar{z} \lambda \tau(A))$. 
Consider the translation 
$$z\mapsto z+\delta, \quad\quad \text{with } \delta\in \C.$$
As a consequence,  the monodromy matrix of the frame $F(z,\lambda)$ for this translation is given by
$$\F(z+\delta,\lambda)=M(\delta, \lambda)\F(z,\lambda),$$
where $$M(\delta,\lambda)=\exp(\delta\lambda^{-1}A+\bar{\delta} \lambda \tau(A)).$$
As a consequence we obtain $F(z+\delta,\lambda)=M(\delta, \lambda)F(z,\lambda)$ and $f(z+\delta,\lambda)=M(\delta, \lambda)f(z,\lambda).$

Clearly, the map $f_{\lambda_0}: \C\rightarrow \C P^2$ can be defined on $\C/\delta \Z$ if and only if $f_{\lambda_0}(z+\delta)=f_{\lambda_0}(z)$.  By the above this is equivalent with 
$M(\delta, \lambda_0)f(z,\lambda_0) = f_{\lambda_0}(z)$ for all $z$. 
If we assume that $f$ is \lq\lq full\rq\rq and that it descends to a torus, then the last relation implies that $M(\delta, \lambda_0)$ is a multiple of identity, 
$M(\delta,\lambda_0)=cI$, where $c$ is  a scalar. Clearly then, $c$ needs to satisfy $c^3=1$.

Since the eigenvalues of $A$ are $i$, $i\alpha$ and $i\alpha^2$, it follows that the closing conditions for $\lambda_0\in S^1$ are
$$e^{i\lambda_0^{-1}\delta+i\lambda_0 \bar{\delta}}=e^{i\lambda_0^{-1}\alpha\delta+i\lambda_0 \alpha^2\bar{\delta}}
=e^{i\lambda_0^{-1}\alpha^2 \delta+i\lambda_0 \alpha \bar{\delta}}=c,$$
which is
\begin{eqnarray}
\mathrm{Re}(\lambda_0^{-1}\delta)&=&\frac{\pi+k\pi}{3}+l_1\pi, \label{periodcon1}\\
\mathrm{Re}(\lambda_0^{-1}\alpha\delta)&=&\frac{\pi+k\pi}{3}+l_2\pi, \label{periodcon2}\\
\mathrm{Re}(\lambda_0^{-1}\alpha^2\delta)&=&\frac{\pi+k\pi}{3}+l_3\pi, \label{periodcon3}
\end{eqnarray}
for $k=0,1$ or $2$ and $l_1,l_2,l_3\in \Z$. Then it is easy to see that for any $\lambda_0 \in S^1$, 
the solutions to \eqref{periodcon1}-\eqref{periodcon3} are given by
$$\delta=\frac{2l_1-l_2-l_3}{3}\lambda_0 \pi +i\frac{l_3-l_2}{\sqrt{3}}\lambda_0 \pi.$$
where $l_1+l_2+l_3+1+k=0$ for $k=0,1$ or $2$ and $l_1,l_2,l_3\in\Z$. 
Therefore, for arbitrary $\lambda_0$, we obtain $\delta(\lambda_0) \mathbb{Z} = \lambda_0 \delta(1) \mathbb{Z} $, i.e. the lattice $\delta(\lambda_0) \mathbb{Z} $ is obtained from the lattice $\delta(1) \mathbb{Z}$  by rotation by $\lambda_0$.
This implies the following
\begin{proposition} 
Every member in the associated family of the Clifford torus is  a torus.
\end{proposition}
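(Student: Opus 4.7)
The plan is to read the proposition as a direct corollary of the lattice computation that immediately precedes it. The analysis of the closing conditions \eqref{periodcon1}--\eqref{periodcon3} has produced, for every $\lambda_0 \in S^1$, an explicit description of the set $\Lambda(\lambda_0)$ of translations $\delta$ for which $M(\delta,\lambda_0) = cI$ with $c^3 = 1$. Since $cI$ acts trivially on $\C P^2$, such a $\delta$ is exactly a period of $f_{\lambda_0}$, and to conclude that $f_{\lambda_0}$ descends to a torus we only need to verify that $\Lambda(\lambda_0)$ is a rank-$2$ lattice in $\C$ for every $\lambda_0 \in S^1$.

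First I would fix $\lambda_0 = 1$ and inspect the explicit formula
$$\delta = \frac{2l_1 - l_2 - l_3}{3}\pi + i\frac{l_3 - l_2}{\sqrt{3}}\pi, \qquad l_1+l_2+l_3+1+k \equiv 0,$$
obtained above. After imposing the single linear constraint on $(l_1,l_2,l_3,k)$, the remaining two integer degrees of freedom give two $\R$-linearly independent generators of $\Lambda(1)$ (for instance one can read off vectors with nonzero real part but zero imaginary part and vectors with nonzero imaginary part), so $\Lambda(1)$ is a rank-$2$ lattice in $\C$, and $\C/\Lambda(1)$ is the expected Clifford torus.

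Second, I would note that the $\lambda_0$-dependence in the formula for $\delta$ is an overall factor of $\lambda_0$; equivalently, $\Lambda(\lambda_0) = \lambda_0 \Lambda(1)$. Multiplication by $\lambda_0 \in S^1$ is a rotation of $\C$, hence an $\R$-linear isomorphism, so $\Lambda(\lambda_0)$ is a rank-$2$ lattice as well and $\C/\Lambda(\lambda_0)$ is a torus. The map $f_{\lambda_0}$ descends to this torus by the discussion preceding the proposition.

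There is no real obstacle beyond bookkeeping: the proposition is essentially a restatement of the displayed identity $\delta(\lambda_0)\Z = \lambda_0 \delta(1)\Z$ together with the observation that $\Lambda(1)$ has full rank. The only point that requires a line of care is the non-degeneracy of $\Lambda(1)$, which is handled by exhibiting two explicit $\R$-independent solutions of the integer constraint; everything else is automatic from the structure of $M(\delta,\lambda)$ and the fact that a scalar matrix $cI$ induces the identity on $\C P^2$.
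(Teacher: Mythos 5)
Your argument is correct and follows essentially the same route as the paper: the proposition is presented there as an immediate consequence of the explicit solution of the closing conditions \eqref{periodcon1}--\eqref{periodcon3} and the identity $\delta(\lambda_0)\mathbb{Z}=\lambda_0\,\delta(1)\mathbb{Z}$, i.e.\ the period lattice for $f_{\lambda_0}$ is the rotation by $\lambda_0$ of the rank-two period lattice of the Clifford torus at $\lambda_0=1$. Your only addition is to spell out the rank-two verification at $\lambda_0=1$ and the remark that scalar monodromy acts trivially on $\C P^2$, both of which the paper leaves implicit.
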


\end{example}

\section{Vacuum solutions}
\label{sec:vacuum}

A \lq\lq vacuum\rq\rq is an extended framing whose normalized potential is given by $\eta=\lambda^{-1}A dz$ with $A\in \mathcal{G}_{-1}$ a constant matrix satisfying  $[A,\tau(A)]=0$, where $\tau$ is the conjugation of $SL(3, \C)$ with respect to the real form $SU(3)$ (see \cite{BuP}). To clarify what this means we consider the  constant matrix
$$A=\begin{pmatrix}
0&0&a\\
b&0&0\\
0&a&0
\end{pmatrix}\in \mathcal{G}_{-1}. 
\quad \text{ Then }
\tau(A)=\begin{pmatrix}
0&-\bar{b}&0\\
0&0&-\bar{a}\\
-\bar{a}&0&0
\end{pmatrix},$$
and the condition $[A,\tau(A)]=0$ says $|a|^2=|b|^2$.

Let's next write  $a=i re^{i\theta}$ and  $b=ir e^{i\beta}$.
Now take the following isometric transformation
$$\begin{pmatrix}
e^{i\delta}&&\\
&e^{-i\delta}&\\
&&1
\end{pmatrix}A
\begin{pmatrix}
e^{-i\delta}&&\\
&e^{i\delta}&\\
&&1
\end{pmatrix}=\begin{pmatrix}
0&0&ire^{i(\theta+\delta)}\\
ire^{i(\beta-2\delta)}&0&0\\
0&ire^{i(\theta+\delta)}&0
\end{pmatrix}.$$
Then choose $\delta$ such that $\theta+\delta=\beta-2\delta$,
i.e., $\delta=\frac{\beta-\theta}{3}$.
Thus, $$\eta=\lambda^{-1}ire^{i\frac{2\theta+\beta}{3}}\begin{pmatrix}0&0&i\\
i&0&0\\
0&i&0\end{pmatrix}dz.$$
Finally, choose a new coordinate: $z\mapsto w = r e^{i\frac{2\theta+\beta}{3}}z$, and we obtain $$\eta=\lambda^{-1}\begin{pmatrix}0&0&i\\
i&0&0\\
0&i&0\end{pmatrix}dw.$$

 Summing up we have
\begin{proposition}
Any vacuum can be deformed  by an isometric  transformation and a coordinate change (if necessary) to the potential of the Clifford torus.
\end{proposition}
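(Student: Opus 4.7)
The plan is to start from the general shape of a vacuum potential and reduce it step by step to the Clifford model by exploiting the two natural equivalences allowed in the statement: isometric conjugation by elements of $SU(3)$ that preserve the $\sigma$-twisting, and biholomorphic changes of the conformal coordinate $z$.

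First I would pin down the general form of $A\in\mathcal{G}_{-1}$. The twisting condition forces $A$ to be supported on the three positions displayed in the excerpt, and a preliminary normalization reduces it to the shape
\[
A=\begin{pmatrix} 0 & 0 & a \\ b & 0 & 0 \\ 0 & a & 0 \end{pmatrix}.
\]
A direct computation of $\tau(A)$ and of the commutator then yields the algebraic constraint $|a|^{2}=|b|^{2}$, which is the only obstruction to $A$ and $\tau(A)$ commuting. This allows me to introduce a common modulus $r\geq 0$ and write $a=ire^{i\theta}$, $b=ire^{i\beta}$, so that $A$ is parametrized by the three real numbers $(r,\theta,\beta)$.

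Next I would use a diagonal $SU(3)$-conjugation to eliminate the phase difference between $a$ and $b$. Conjugation by $\mathrm{diag}(e^{i\delta},e^{-i\delta},1)$ preserves the off-diagonal support pattern of $A$ (hence the twisting), is an isometry of the Fubini--Study metric, and sends the phases $\theta\mapsto \theta+\delta$, $\beta\mapsto \beta-2\delta$. The choice $\delta=(\beta-\theta)/3$ aligns the two phases to the common value $(2\theta+\beta)/3$, so the transformed $A$ becomes $ire^{i(2\theta+\beta)/3}$ times the matrix appearing in the Clifford torus example. Finally the conformal coordinate change $w=re^{i(2\theta+\beta)/3}\,z$ (multiplication by a nonzero complex scalar, hence a holomorphic change preserving the $\lambda^{-1}\,dz$ structure of a normalized potential) absorbs the remaining scalar and gives exactly the Clifford torus potential computed in the earlier example.

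The only mildly nontrivial step is verifying that each reduction preserves the class of vacuum potentials, i.e.\ that diagonal conjugation respects the twisting of $\Lambda SL(3,\C)_{\sigma}$ and that the coordinate rescaling keeps $\eta$ in the normalized form $\lambda^{-1}A\,dw$; both are straightforward, so I do not anticipate a genuine obstacle. The remaining content is the bookkeeping of phases described above, which follows the computation already displayed in the text preceding the proposition.
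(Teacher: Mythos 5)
Your proposal is correct and follows essentially the same route as the paper: the condition $[A,\tau(A)]=0$ giving $|a|^2=|b|^2$, the diagonal conjugation by $\mathrm{diag}(e^{i\delta},e^{-i\delta},1)$ with $\delta=(\beta-\theta)/3$ to align the phases, and the coordinate rescaling $w=re^{i(2\theta+\beta)/3}z$ absorbing the remaining factor are exactly the steps carried out in the text preceding the proposition.
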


\section{Equivariant minimal Lagrangian immersions into $\C P^2$}
\label{Sec:equiv}

\subsection{General background}

For all classes of surfaces, the surfaces admitting some symmetries are
of particular interest and beauty.

While a basic definition of a symmetry $R$ for a surface $f(M)$ may only mean $R f(M) = f(M)$, it is very useful to know that if the induced metric is complete, then on the universal cover $\tilde{M}$ of $M$ one finds some automorphism $\gamma$ such that 
\begin{equation} \label{basic-symmetry}
f(\gamma\cdot z) = R f(z) \hspace{2mm} \mbox{for all} \hspace{2mm} z 
\in \tilde{M}.
\end{equation}
Therefore, in this paper, a \lq\lq symmetry\rq\rq will always be a pair $(\gamma, R) \in
(Aut(M), Iso(\C P^2))$, such that (\ref{basic-symmetry}) holds.

The usual transition to the associated  family $f_{\lambda}$ then produces some family  $R(\lambda)$  of isometries of $\C P^2$ such that we have
\begin{equation*} \label{basic-lambda- symmetry}
f_{\lambda} (\gamma\cdot z) = R(\lambda) f_\lambda(z) \hspace{2mm} \mbox{for all} \hspace{2mm} z \in \tilde{M}.
\end{equation*}
More details can be found in \cite{DoHa;sym1}, \cite{DoHa;sym2}, \cite{DoWasym1}.

In this paper we will investigate minimal Lagrangian immersions for which there exists a one-parameter family  $(\gamma_t, R_t) \in (Aut(M), Iso(\C P^2))$ of symmetries.

\begin{definition}
Let $M$ be any connected Riemann surface and  $f:M \rightarrow \mathbb{C}P^2$ an immersion. Then $f$  is called  equivariant, relative to the one-parameter group $(\gamma_t, R(t)) \in (Aut(M), Iso(\C P^2))$, if 
$$f(\gamma_t \cdot p)=R(t) f(p)$$ for all $p\in M$ and all $t\in \mathbb{R}$.
\end{definition}

By the definition above, any Riemann surface $M$ admitting an equivariant minimal Lagrangian immersion admits a one-parameter group of (biholomorphic) automorphisms.
Fortunately, the classification of such surfaces is very simple:
\begin{theorem} (Classification of Riemann surfaces admitting one-parameter groups of automorphisms, e.g. \cite{Farkas-Kra})

\begin{enumerate}
\item  $S^2$,
\item  $\C$, $\mathbf{D}$,
\item  $\C^*$,
\item  $\mathbf{D}^*, \mathbf{D}_r$,
\item $T=\C/\Lambda_{\tau}$,
\end{enumerate}
where the superscript "$^*$" denotes deletion of the point $0$, the subscript "$r$" denotes the open annulus between $0 < r < 1/r$ and 
$\Lambda_{\tau}$ is the free group generated by the two translations $z\mapsto z+1$, $z\mapsto z+\tau$, $\mathrm{Im}\tau >0$.
\end{theorem}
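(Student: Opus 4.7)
The proof naturally splits along the Uniformization Theorem. My plan is to pull a one-parameter group of automorphisms of $M$ up to the universal cover $\tilde M$, use its continuity together with the discreteness of the deck group to force a strong commutation relation, and then enumerate the possibilities in each of the three uniformization cases.

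\textbf{Step 1 (Uniformization and lifting).} Write $M=\tilde M/\Gamma$ where $\tilde M\in\{S^2,\C,\mathbf D\}$ and $\Gamma\subset\mathrm{Aut}(\tilde M)$ is a discrete subgroup acting freely. A one-parameter group $\{\phi_t\}\subset\mathrm{Aut}(M)$ lifts to a one-parameter family $\{\tilde\phi_t\}\subset\mathrm{Aut}(\tilde M)$ (choosing $\tilde\phi_0=\mathrm{id}$) which normalizes $\Gamma$. For each $\gamma\in\Gamma$ the map $t\mapsto \tilde\phi_t\gamma\tilde\phi_t^{-1}$ is a continuous path in the discrete set $\Gamma$ starting at $\gamma$, hence is constant. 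Therefore $\{\tilde\phi_t\}$ lies in the \emph{centralizer} $Z_{\mathrm{Aut}(\tilde M)}(\Gamma)$.

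\textbf{Step 2 (The spherical and parabolic covers).} If $\tilde M=S^2$, then $\mathrm{Aut}(S^2)=\mathrm{PSL}(2,\C)$, every non-identity element of which has a fixed point, so $\Gamma=\{e\}$ and $M=S^2$. If $\tilde M=\C$, then $\mathrm{Aut}(\C)$ consists of affine maps $z\mapsto az+b$, and acting freely forces $a=1$, so $\Gamma$ is a discrete group of translations. The three possibilities — rank $0$, $1$, $2$ — give respectively $\C$, $\C^*$ (via $z\mapsto \exp(2\pi i z/\omega)$), and a torus $\C/\Lambda_\tau$. In each instance the full translation group $z\mapsto z+t$ centralizes $\Gamma$, giving the required one-parameter group.

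\textbf{Step 3 (The hyperbolic cover, main case).} Now $\tilde M=\mathbf D$ with $\mathrm{Aut}(\mathbf D)\cong \mathrm{PSU}(1,1)$, and every nontrivial element of $\Gamma$ is parabolic or hyperbolic (since torsion would give interior fixed points). The centralizer of a parabolic element is the one-parameter group of parabolics sharing its single boundary fixed point, and the centralizer of a hyperbolic element is the one-parameter group of hyperbolics sharing its axis; these two kinds of centralizers have trivial intersection. Since $Z_{\mathrm{Aut}(\mathbf D)}(\Gamma)$ must contain a one-parameter subgroup, all elements of $\Gamma$ share one fixed point (parabolic case) or the same pair of fixed points (hyperbolic case). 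In particular $\Gamma$ is abelian and lives inside a one-parameter subgroup of $\mathrm{Aut}(\mathbf D)$, so discreteness forces $\Gamma$ to be trivial or infinite cyclic. The resulting quotients are $\mathbf D$, the punctured disk $\mathbf D^*$ (cyclic parabolic, using the model $\Im z>0$ with $\Gamma=\langle z\mapsto z+1\rangle$ and exponentiating), and the annulus $\mathbf D_r$ (cyclic hyperbolic, using the model $\Im z>0$ with $\Gamma=\langle z\mapsto e^{2\pi^2/\log(1/r^2)}z\rangle$).

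\textbf{Main obstacle.} The delicate point is Step 3: ruling out more complicated Fuchsian groups by showing that the existence of a centralizer containing a one-parameter subgroup pins down the axes/fixed points of every element of $\Gamma$ simultaneously, and then extracting cyclicity from the fact that a discrete subgroup of a one-parameter subgroup of $\mathrm{PSU}(1,1)$ is at most $\Z$. Once that is done, identifying the explicit models $\mathbf D^*$ and $\mathbf D_r$ is a standard conformal change of coordinates.
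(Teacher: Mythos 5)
Your argument is essentially correct, and it is worth noting that the paper itself offers no proof of this statement: it is quoted as a known classification with a citation to Farkas--Kra. So you are supplying a proof where the paper relies on the literature, and the route you take (uniformization, lifting the flow to the universal cover, the continuity-versus-discreteness argument forcing the lifted family into the centralizer of the deck group, and then the case analysis over $S^2$, $\C$, $\mathbf D$) is the standard one and works. Two small points of precision. First, the lifts $\tilde\phi_t$ chosen continuously with $\tilde\phi_0=\mathrm{id}$ need not form a one-parameter \emph{group} (the relation $\tilde\phi_{t+s}=\tilde\phi_t\tilde\phi_s$ may fail by a deck transformation); this does not harm your Step 1, since the constancy of $t\mapsto\tilde\phi_t\gamma\tilde\phi_t^{-1}$ only uses continuity, but in Step 3 you should phrase the conclusion as: the closed centralizer $Z_{\mathrm{Aut}(\mathbf D)}(\Gamma)$ contains a nontrivial connected set through the identity, hence a nontrivial one-parameter subgroup, which is what pins down the common fixed points. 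Second, elliptic elements of $\Gamma$ are excluded simply because they fix a point of $\mathbf D$ and the deck action is free (the remark about torsion is a red herring; freeness alone suffices). With those adjustments your Step 3 correctly forces $\Gamma$ into a single hyperbolic or parabolic one-parameter subgroup, hence trivial or infinite cyclic, and your explicit models for $\mathbf D^*$ and $\mathbf D_r$ (including the modulus computation $\mu=e^{2\pi^2/\log(1/r^2)}$) match the surfaces listed in the theorem, while the genus-zero and parabolic cases reproduce items (1)--(3) and (5). You might also state explicitly, for completeness of a \emph{classification}, the converse direction that each listed surface does admit a one-parameter group (translations, rotations), which you only indicate in passing.
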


Looking at this classification a bit more closely, one sees that after some biholomorphic transformations one obtains the following picture, including 
representative one-parameter groups:

\begin{theorem} \label{class-equi}
(Classification of Riemann surfaces admitting one-parameter groups of automorphisms and
representatives for the one-parameter  groups,e.g. \cite{Farkas-Kra})

(1) $S^2$, group of all rotations about the $z$-axis,

(2a) $\C$, group of all real translations,

(2b) $\C$, group of all rotations about the origin $0$,

(2c) $\mathbf{D}$, group of all rotations about the origin $0$,

(2d) $\mathbf{D} \cong \mathbb{H}$, group of all real translations,

(2e) $ \mathbf{D} \cong \mathbb{H} \cong \log  \mathbb{H} =\St$, the strip between $y=0$ and $y = \pi$, 
group of all real translations,

(3) $\C^*$, group of all rotations about $0$,

(4) $\mathbf{D}^*, \mathbf{D}_r,$ group of all rotations about $0$,

(5) $T$, group of all real translations.

\end{theorem}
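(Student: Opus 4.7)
The plan is to proceed case by case through the list of Riemann surfaces supplied by the preceding classification theorem. For each surface $M$, I would identify the full biholomorphic automorphism group $\operatorname{Aut}(M)$, sort its connected one-parameter subgroups into conjugacy classes, and then use a biholomorphism of $M$ to bring the chosen one-parameter subgroup into one of the stated normal forms. The entire argument is a routine exercise in classical Riemann surface theory; the only nontrivial input is the disk case, which accounts for the three sub-items (2c)--(2e).

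For the simply-connected surfaces I would use the standard presentations $\operatorname{Aut}(S^2)=PSL(2,\C)$, $\operatorname{Aut}(\C)=\{z\mapsto az+b\}$, and $\operatorname{Aut}(\mathbf{D})=PSU(1,1)$. For $PSL(2,\C)$ the compact (elliptic) class of one-parameter subgroups is conjugate to the rotations about a fixed axis after stereographic projection, giving (1). For the affine group on $\C$, a direct computation shows that any nontrivial one-parameter subgroup is affinely conjugate either to a translation subgroup, further normalized by a rotation to the real translations (2a), or to a group of the form $z\mapsto e^{ta}(z+k)-k$, which after translation becomes $z\mapsto e^{ta}z$ and, for $a\in i\R$, yields the rotations about $0$ of (2b). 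For $PSU(1,1)$ the classical trichotomy of one-parameter subgroups into elliptic, parabolic, and hyperbolic classes, combined with the three equivalent models $\mathbf{D}\cong \Up\cong \St$ produced by the Cayley transform and $z\mapsto\log z$, produces (2c), (2d), and (2e) respectively.

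For the non-simply-connected surfaces I pass to the universal cover. Writing $\C^*=\C/\langle z\mapsto z+2\pi i\rangle$, any automorphism lifts to an affine automorphism of $\C$ commuting with the deck group, which forces the identity component of $\operatorname{Aut}(\C^*)$ to be $\{z\mapsto az:a\in\C^*\}$; its compact one-parameter subgroup is $z\mapsto e^{it}z$, giving (3). The same analysis, with the added requirement that the puncture or the two boundary circles be preserved, handles $\mathbf{D}^*$ and $\mathbf{D}_r$ and forces the rotations of (4). For $T=\C/\Lambda_\tau$, automorphisms lift to affine maps of $\C$ preserving $\Lambda_\tau$; the identity component consists of translations, and after a rotation of $\C$ (which changes $\tau$ but not the torus up to biholomorphism) the given one-parameter subgroup becomes the group of real translations, yielding (5). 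The main technical step is the disk case (2c)--(2e), where I would verify that the three classes in $PSU(1,1)$ are genuinely non-conjugate by distinguishing the fixed-point set of an infinitesimal generator on $\overline{\mathbf{D}}$ (two interior, one boundary, or two boundary points respectively), a conjugation invariant that matches each of the three listed normal forms through the biholomorphisms indicated above.
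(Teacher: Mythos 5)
The paper itself offers no proof of this statement: it is quoted as a known classification from \cite{Farkas-Kra}, with the representative groups obtained by ``looking more closely'' at the preceding theorem. So your plan --- identify $Aut(M)$ for each surface on the list, sort one-parameter subgroups into conjugacy classes, and normalize by a biholomorphism --- is the natural way to substantiate it, and your treatment of the disk via the elliptic/parabolic/hyperbolic trichotomy in $PSU(1,1)$, transported through the Cayley and logarithm models, is exactly the content behind items (2c)--(2e); the punctured disk, annulus and $\mathbf{D}^*$/$\mathbf{D}_r$ cases are also handled correctly.

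There is, however, a concrete gap: you announce that every one-parameter subgroup can be brought ``into one of the stated normal forms,'' but your case analysis silently discards classes for which this is false at the level of pure Riemann surface theory. On $\C$, the subgroups $z\mapsto e^{ta}z$ with $\mathrm{Re}\,a\neq 0$ (dilations and spirals) are one-parameter groups of automorphisms, and no affine conjugation or real reparametrization makes the multiplier unimodular; your clause ``for $a\in i\R$'' simply drops them. The same happens on $S^2$ (parabolic and loxodromic one-parameter subgroups of $PSL(2,\C)$ are not conjugate to rotations), on $\C^*$ (the identity component $\{z\mapsto az\}$ contains $z\mapsto e^t z$, not only the compact subgroup you pick), and on $T$ (a translation flow in a direction containing no lattice vector cannot be conjugated to the real translations of $\C/\Lambda_\tau$ while keeping the normalized lattice). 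So you must either complete the classification by adding these classes, or make explicit the reading under which the theorem is used in this paper: the groups listed are representatives adequate for the geometric situation, because the symmetry group $\gamma_t$ of an equivariant immersion preserves the induced conformal metric (the $R(t)$ lie in $Iso(\C P^2)$), and invariance of a smooth positive conformal factor at a fixed point forces a unimodular multiplier --- which excludes precisely the classes above and leaves only rotations and translations. As written, your proof claims a conjugacy statement that the spiral subgroups of $Aut(\C)$ already contradict, so state which interpretation you adopt and supply the corresponding argument.
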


For later purposes we state the following 

\begin{definition}
Let $f: M \rightarrow \C P^2$ be an equivariant minimal Lagrangian immersion, then $f$ will be called \lq\lq translationally equivariant\rq\rq, if the group of automorphisms acts by (all real) translations. It will be called \lq\lq rotationally equivariant\rq\rq, if the group acts by (all) rotations
(about the origin).
\end{definition}

\begin{remark} 
The case of  $S^2$ is usually special and has been treated in the literature. For minimal Lagrangian immersions this case has been treated in (\cite{Yau1974}) and it has been shown that  any minimal Lagrangian immersion $f$ from a sphere to $\C P^2$ is totally geodesic and it is the standard immersion of $S^2$ into $\mathbb{C}P^2$ (\cite{Yau1974}). Therefore, up to a few exceptions, we will exclude the case $S^2$ from the discussions in this paper. We would like to point out, however, that this case could be discussed like the general case below. In this case we would need to deal with algebraic solutions to elliptic equations listed below.
\end{remark}

\subsection{Rotationally equivariant minimal Lagrangian immersions}

In view of  Theorem \ref{class-equi} there are two types of equivariant surfaces, translationally equivariant surfaces and rotationally equivariant surfaces. The translationally equivariant case will be discussed in Section \ref{Sec:trans}. Thus here it remains to consider
rotationally equivariant minimal Lagrangian immersions.

There are essentially three types of such surfaces: those without fixed point in $M$, 
those with exactly one fixed point in $M$ and those with two fixed points in $M$, i.e. $M =S^2$. Let's first consider the cases $\C^*, \mathbf{D}^*, \mathbf{D}_r$, which do not contain the fixed point of the group of rotations.

Using the covering map $ w \rightarrow exp(iw)$, we see that the rotationally symmetric minimal Lagrangian immersion  $f$ is obtained from some translationally equivariant minimal Lagrangian immersion $\tilde{f}$ defined on some strip $\St$. Obviously, the condition of descending to the given rotationally equivariant minimal Lagrangian immersion is equivalent with
$\tilde{f}$ being $ 2 \pi-$periodic in the variable corresponding to the 
group of translations. Since $\tilde{f}$ is actually defined on $\C$ and real analytic, it is clear that $\tilde{f}$ is $2 \pi-$periodic on $\C$ and thus descends to a rotationally equivariant minimal Lagrangian immersion on $\C^*$.

\begin{theorem}
Consider a rotationally equivariant minimal Lagrangian immersion $f:M \rightarrow \C P^2$ defined 
on $M =\C^*, \mathbf{D}^*, \mathbf{D}_r$.
Then $f$ can be extended without loss of generality to $\C^*$ and can be obtained from some 
$ 2 \pi-$periodic translationally equivariant minimal Lagrangian immersion defined on $\C$ by projection.
\end{theorem}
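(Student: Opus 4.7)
The plan is to lift $f$ via the covering map $p(w) = e^{iw}$ to the universal cover of $M$ (where rotations become real translations), extend to a map on all of $\C$, and then descend by exploiting the automatic $2\pi$-periodicity of the lift.

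For each $M \in \{\C^*, \mathbf{D}^*, \mathbf{D}_r\}$ the universal cover embeds in $\C$ as a simply connected domain $\tilde M$ invariant under all real translations ($\C$, a half-plane, or a horizontal strip), with covering map $p: w \mapsto e^{iw}$. Under $p$ the rotation $z \mapsto e^{it} z$ pulls back to $w \mapsto w + t$. Lifting produces $\tilde f: \tilde M \to \C P^2$ with $\tilde f(w+t) = R(t)\tilde f(w)$ for all $t \in \R$, so $\tilde f$ is translationally equivariant. Translation invariance then forces the conformal factor $u$ to depend only on $y = \mathrm{Im}\, w$, and the holomorphic Hopf coefficient $\psi$ to be constant, so the Gauss-Codazzi system \eqref{eq:mLsurfaces}--\eqref{eq:Codazzi} reduces to a single ODE in $y$. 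On the loop group side this corresponds to a normalized potential $\eta = \lambda^{-1} A\, dz$ with constant $A$ (compare Section~\ref{sec:vacuum}), holomorphic on all of $\C$; solving $dC = C\eta$ globally on $\C$ and then applying Theorem~\ref{Thm:Iwasawa} pointwise produces an extended frame and hence an extension $\tilde f: \C \to \C P^2$, on which the translational equivariance persists by real-analytic continuation.

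Now on the original domain $\tilde M$ the lift $\tilde f = f \circ p$ is automatically $2\pi$-periodic in $\mathrm{Re}\, w$ because $e^{i(w+2\pi)} = e^{iw}$. By real analyticity (equivalently, by uniqueness of the DPW reconstruction from the potential $\eta$) this periodicity persists on the extension to $\C$, so $\tilde f$ descends through $p$ to a rotationally equivariant minimal Lagrangian immersion $\C^* \to \C P^2$ that restricts to $f$ on $M$. I expect the main obstacle to be the global extension step of the middle paragraph: verifying that the reduced ODE for $u(y)$ admits a real-analytic solution on all of $\R$ and that the associated loop group construction does not develop singularities. This should follow from the explicit integration of the reduced ODE in terms of Weierstrass $\wp$-functions carried out in Section~\ref{Sec:trans}, which delivers a globally defined real-analytic $u(y)$ on a suitable translate of the real axis chosen to avoid the pole locus; alternatively, it follows directly from the DPW construction, since the potential $\eta$ is holomorphic and defined on all of $\C$.
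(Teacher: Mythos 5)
Your argument follows essentially the same route as the paper: pull back through the covering $w \mapsto e^{iw}$ so the rotations become real translations, observe that the resulting translationally equivariant immersion $\tilde f$ on the strip is in fact defined on all of $\C$, and use real analyticity to propagate the automatic $2\pi$-periodicity from the strip to $\C$ before descending to $\C^*$. One correction to your middle paragraph: translational equivariance does \emph{not} make the normalized potential constant of the form $\lambda^{-1}A\,dz$ (a constant normalized potential with $[A,\tau(A)]=0$ is a vacuum, and by Section~\ref{sec:vacuum} that would force the Clifford torus); what is constant is the degree-one holomorphic potential $D(\lambda)\,dz$ with $D(\lambda)=\lambda^{-1}D_{-1}+D_0+\lambda D_1\in\Lambda su(3)_{\sigma}$ provided by the Burstall--Kilian result (Theorem~\ref{essential-transfo}). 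With that substitution your ``alternative'' justification of the extension step is precisely the one the paper implicitly relies on, and your primary justification via the globally defined Weierstrass $\wp$-solution of the reduced ODE for $u(y)$ in Section~\ref{Sec:trans} is also sound.
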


Next we consider the rotationally equivariant minimal Lagrangian immersions defined on $\C$ or $\mathbf{D}$. In these cases we remove the fixed point $0$ and obtain  rotationally equivariant minimal Lagrangian immersions without fixed point. The last theorem shows that we only need to consider the case $M = \C$. Clearly this is a special case of a rotationally equivariant minimal Lagrangian immersion defined on $\C^*$. Finally,  considering $S^2$, we can assume without loss of generality that the group acts by rotations about the $z-$axis. Then any rotationally equivariant minimal Lagrangian immersion on $S^2$ is a special case of a rotationally equivariant minimal Lagrangian immersion defined on $\C$.

\subsection{Rotationally equivariant minimal Lagrangian immersions defined on $\C$}
\label{subsect:Rotationally equiv}

Let $f:\C \rightarrow \C P^2$ be a rotationally equivariant minimal Lagrangian immersion and $\F(z,\lambda)$ an extended frame. 
We normalize $\F$ by $\F(z=0,\lambda)=I$.
Then the equivariance is reflected by the equation
\begin{equation*}
\F(e^{it}z, \lambda) = \chi(e^{it}, \lambda) \F(z,\lambda) \mathcal{K}(e^{it}, z).
\end{equation*}

Setting $z=0$ shows $\chi(e^{it}, \lambda) \mathcal{K}(e^{it}, 0) = I.$
As a consequence, $\chi$ is independent of $\lambda$ and a one-parameter group in $K$. Hence 
\begin{equation*}
\chi(e^{it}, \lambda) = \exp( it r\delta),
\end{equation*}
where $r \in \mathbb{R}$, $t \in \R$ and $\delta = \mbox{diag}(1,-1,0)$.

Performing a Birkhoff splitting  of $\F$, $\F = \F_- V_+,$ we derive
\begin{equation*}
\F_- ( e^{it}z, \lambda) =  
\exp( itr \delta) \F_- ( z, \lambda) \exp( -itr\delta). 
\end{equation*}

For the Maurer-Cartan form $\eta_- = {\F_-}^{-1} d \F_- $ of 
$\F_- $ we then obtain
\begin{equation*}
(e^{it})^* \eta_- = \exp( itr\delta) \eta_- \exp(- itr\delta).
\end{equation*}
Hence the normalized potential $\eta_-$ has the form $\eta_- = \lambda^{-1} A dz,$ where

\begin{equation*}
A=\begin{pmatrix}0&0&az^{m-1}\\
bz^{-2m-1}&0&0\\
0&az^{m-1}&0
\end{pmatrix}
\end{equation*}
with $m\in \mathbb{Z}$ and certain complex numbers $a$ and $b$.

Since we had normalized everything at $z=0$, the normalized potential is holomorphic at $z=0$. Hence $ m \geq 1$ and $b=0$, 
which implies that 
the cubic Hopf differential $\Psi$ vanishes. It follows from
\eqref{eq:mLsurfaces} that the Gauss curvature satisfies 
$K=-u_{z\bar{z}}e^{-u}=1$. 
Then from the Gauss equation we obtain that
$\frac{S}{2}=1-K$ vanishes,
where $S$ is the norm square of the second fundamental form of the surface. Therefore $f$ is totally geodesic, hence 
the image of this minimal Lagrangian immersion $f$ lies in $\R P^2$ up to isometries of $\C P^2$.
We thus have reproved part of Corollary 3.9 of \cite{EschenburgGT}.
As a consequence we obtain

\begin{theorem}
Any  minimal Lagrangian immersion $f$ from $\C$ or $S^2$ into $\C P^2$ which is rotationally equivariant has a vanishing cubic Hopf differential, and therefore  is  totally  geodesic in $\C P^2$ and  its image
 is, up to isometries of $\C P^2$, contained in 
$\R P^2$.
\end{theorem}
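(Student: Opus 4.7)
I would first invoke the preceding discussion to reduce both cases, $M=\C$ and $M=S^2$, to rotationally equivariant minimal Lagrangian immersions of $\C$, and henceforth work exclusively on $\C$. I would choose an extended frame $\F(z,\lambda)$ with the normalization $\F(0,\lambda)=I$. Rotational equivariance at the frame level then yields
$$\F(e^{it}z,\lambda) = \chi(e^{it},\lambda)\,\F(z,\lambda)\,\mathcal{K}(e^{it},z)$$
for a suitable loop $\chi$ and a gauge $\mathcal{K}$. Setting $z=0$ gives $\chi(e^{it},\lambda)\mathcal{K}(e^{it},0)=I$, which both kills the $\lambda$-dependence of $\chi$ and places it in the isotropy subgroup $K$. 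Since $t\mapsto \chi(e^{it})$ is then a one-parameter subgroup of $K$ compatible with the $\sigma$-twisting, it must have the form $\exp(itr\delta)$ with $\delta=\mathrm{diag}(1,-1,0)$ and some $r\in\mathbb{R}$.

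Next, I would apply the Birkhoff splitting $\F=\F_- V_+$ and push the equivariance down to $\F_-$, from which the normalized potential $\eta_-=\F_-^{-1}d\F_-$ inherits the transformation rule
$$(e^{it})^{*}\eta_- = \exp(itr\delta)\,\eta_-\,\exp(-itr\delta).$$
Combining this with the twisted loop algebra constraint, which forces $\eta_- \in \lambda^{-1}\mathcal{G}_{-1}$ and thus localises the nonzero entries to positions $(1,3)$, $(2,1)$, $(3,2)$, and then matching the $z$-scaling entry by entry against the diagonal conjugation by $\exp(itr\delta)$, yields
$$\eta_- = \lambda^{-1}\begin{pmatrix}0&0&az^{m-1}\\ bz^{-2m-1}&0&0\\0&az^{m-1}&0\end{pmatrix}dz,$$
for some $m\in\mathbb{Z}$ and $a,b\in\mathbb{C}$.

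The decisive step is to use that $\F_-$, hence $\eta_-$, is regular at $z=0$ by virtue of the normalization $\F(0,\lambda)=I$. Regularity of the $(2,1)$-entry forces $b=0$, and regularity of the $(1,3)$- and $(3,2)$-entries forces $m\geq 1$. Since the $(2,1)$-entry of the leading coefficient of $\eta_-$ is precisely what Wu's formula identifies with the cubic Hopf differential, we conclude $\psi\equiv 0$. Equation \eqref{eq:mLsurfaces} then collapses to $u_{z\bar z}+e^u=0$, so the Gauss curvature $K=-u_{z\bar z}e^{-u}$ equals $1$; the Gauss equation for a minimal Lagrangian surface in $\C P^2$ reads $S/2=1-K$, hence $S=0$, i.e.\ $f$ is totally geodesic. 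Finally, by the standard classification of totally geodesic Lagrangian submanifolds of $\C P^2$, the image of $f$ must be contained, up to an isometry of $\C P^2$, in $\R P^2\subset\C P^2$. I expect the main technical hurdle to be pinning down the exact form of $\eta_-$ from the combined equivariance condition and the twisted-loop constraints, including the correct integer exponents and the coincidence of the $(1,3)$- and $(3,2)$-coefficients; once that form is in hand, the regularity argument at $z=0$ and the Gauss equation complete the proof routinely.
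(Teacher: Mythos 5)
Your proposal is correct and follows essentially the same route as the paper: normalize $\F(0,\lambda)=I$, read off $\chi(e^{it},\lambda)=\exp(itr\delta)$ from the $z=0$ evaluation, push the equivariance through the Birkhoff splitting to pin down the normalized potential as $\lambda^{-1}\bigl(\begin{smallmatrix}0&0&az^{m-1}\\ bz^{-2m-1}&0&0\\0&az^{m-1}&0\end{smallmatrix}\bigr)dz$, and use holomorphy at $z=0$ to force $b=0$, $m\geq 1$, hence $\psi\equiv 0$, $K=1$, $S=0$, and the image in $\R P^2$. The reduction of the $S^2$ case to $\C$ and the final Gauss-equation argument also match the paper's treatment.
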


\begin{remark}
This result can also be obtained by using the explicit Iwasawa decomposition discussed below. Also see Remark \ref{rem:4}.
\end{remark}

\section{Explicit discussion of translationally equivariant minimal Lagrangian immersions}
\label{Sec:trans}

\subsection{Burstall-Kilian theory for translationally equivariant minimal Lagrangian immersions}
\label{subsect:BK potential}

We now consider  translationally equivariant minimal Lagrangian immersions defined on some strip $\St$ with values in $\C P^2$,
$f:\St \rightarrow \mathbb{C} P^2$, i.e. minimal Lagrangian immersions  for which there exists a one-parameter subgroup $R(t)$ of $SU(3)$ such that 
$$f(t + z, t+\bar{z})=R(t) f(z,\bar{z})$$
for all $z\in \St$. 
Following the approach of \cite{BuKi}, we have shown

\begin{theorem}[\cite{DorfMa1}] \label{essential-transfo}
For the extended frame $\F$ of any translationally equivariant minimal Lagrangian immersion we can assume without loss of generality 
$\F(0,\lambda)=I$ and
\begin{equation*}
\mathbb{F}(t+z, \lambda)=\chi(t,\lambda)\mathbb{F}(z,\lambda),
\end{equation*}
with $\chi(t,\lambda)=e^{tD(\lambda)}$ for some $D (\lambda)\in \Lambda su(3)_{\sigma}$. 
\end{theorem}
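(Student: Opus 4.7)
The plan is to deduce the conclusion from uniqueness of the extended frame as a solution of its Maurer--Cartan ODE, combined with the fact that translational equivariance of the immersion forces translation-invariance of that Maurer--Cartan form. This is a direct specialization of the Burstall--Kilian loop-group dressing argument \cite{BuKi} to the translational orbit.

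First I would show that the equivariance $f(z+t,\bar z+t)=R(t)f(z,\bar z)$ with $R(t)\in SU(3)$ an isometry of $\C P^2$ forces the conformal factor $u$ and the Hopf differential coefficient $\psi$ to be translation-invariant: $u(z+t,\bar z+t)=u(z,\bar z)$ and $\psi(z+t)=\psi(z)$. Because the Maurer--Cartan form $\alpha(z,\lambda)=\mathbb{F}^{-1}d\mathbb{F}$ of the extended frame is built from $u$, $\psi$ and $\lambda$ by the explicit formulas recalled in Section \ref{Sec:setup}, it inherits this invariance, $(z\mapsto z+t)^*\alpha=\alpha$.

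Next I would normalize by replacing $\mathbb{F}(z,\lambda)$ with $\mathbb{F}(0,\lambda)^{-1}\mathbb{F}(z,\lambda)$, which is again an extended frame (left multiplication by a $z$-independent loop does not change $\alpha$) and now satisfies $\mathbb{F}(0,\lambda)=I$. With $\alpha$ translation-invariant, the two maps $z\mapsto \mathbb{F}(z+t,\lambda)$ and $z\mapsto \mathbb{F}(z,\lambda)$ both solve the ODE $d\Phi=\Phi\,\alpha(z,\lambda)$, so by uniqueness of solutions they differ by a left constant in $z$; evaluating at $z=0$ identifies this constant as $\mathbb{F}(t,\lambda)$, giving
\[
\mathbb{F}(z+t,\lambda)=\chi(t,\lambda)\mathbb{F}(z,\lambda),\qquad \chi(t,\lambda):=\mathbb{F}(t,\lambda)\in \Lambda SU(3)_\sigma .
\]

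Finally, associativity of translations yields the cocycle identity $\chi(t_1+t_2,\lambda)=\chi(t_1,\lambda)\chi(t_2,\lambda)$, so $t\mapsto \chi(t,\lambda)$ is a (continuous, hence smooth) one-parameter subgroup of the Banach Lie group $\Lambda SU(3)_\sigma$ and is therefore of the form $\chi(t,\lambda)=e^{tD(\lambda)}$ with $D(\lambda)=\partial_t\chi(t,\lambda)|_{t=0}\in \Lambda su(3)_\sigma$. The main obstacle is the first step: rigorously passing from the extrinsic equivariance $f(z+t)=R(t)f(z)$ to the intrinsic translation-invariance of $(u,\psi)$, and hence of $\alpha$, in a way that is compatible with the horizontal lift and the $\lambda$-deformation of the frame. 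Once that invariance is established, the ODE uniqueness and one-parameter subgroup arguments above are essentially formal.
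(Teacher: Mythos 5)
Your proposal is correct and takes essentially the same route as the paper, which does not reprove the statement here but cites the Burstall--Kilian-type argument of \cite{DorfMa1}: translation-invariance of $u$ and $\psi$ (immediate since $R(t)$ is an isometry and the Hopf differential is lift-independent) makes the Maurer--Cartan form of the extended frame translation-invariant for every $\lambda$, ODE uniqueness then gives $\F(z+t,\lambda)=\F(t,\lambda)\F(z,\lambda)$ after normalizing $\F(0,\lambda)=I$, and the smooth cocycle property identifies $\chi(t,\lambda)=e^{tD(\lambda)}$ with $D(\lambda)=\alpha(\partial_x)|_{z=0}\in\Lambda su(3)_{\sigma}$. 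The step you flag as the main obstacle is harmless in this setting because the frame is built canonically from the lift, $u$ and $\psi$, so no residual gauge factor in the isotropy group appears.
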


Note that  $\F$  satisfies 

\begin{equation*}\label{eq:mathbbF}
\begin{split}
\mathbb{F}^{-1}\mathbb{F}_z&=
\frac{1}{\lambda}\left(
   \begin{array}{ccc}
     0 & 0 & i e^{\frac{u}{2}} \\
  -i \psi e^{-u}   & 0 & 0 \\
     0 &  i e^{\frac{u}{2}} & 0 \\
   \end{array}
 \right)+\left(
   \begin{array}{ccc}
   \frac{u_z}{2}  & &  \\
      & -\frac{u_z}{2} &  \\
      & & 0 \\
   \end{array}
 \right)\\
 &:=\lambda^{-1}U_{-1}+U_0,\\
\mathbb{F}^{-1}\mathbb{F}_{\bar{z}} &=\lambda \left(
   \begin{array}{ccc}
     0 &  -i\bar{\psi} e^{-u} & 0 \\
     0& 0   & i e^{\frac{u}{2}} \\
     i e^{\frac{u}{2}} &0 & 0 \\
   \end{array}
 \right)+\left(
   \begin{array}{ccc}
     -\frac{u_{\bar z}}{2} & &  \\
      & \frac{u_{\bar z}}{2}  & \\
     & & 0 \\
   \end{array}
 \right)\\
  &:=\lambda V_{1}+V_0.
\end{split}
\end{equation*}
and we can assume 
\begin{equation} \label{equiF}
\F(x+iy, \lambda) = e^{(x+iy) D(\lambda)} U_+(y, \lambda)^{-1} = e^{xD(\lambda)} \F(iy,\lambda),
\end{equation}
with $U_+(y,\lambda) \in \Lambda^+ SL(3,\C)_\sigma.$

\subsection{The basic set-up for an explicit Iwasawa decomposition}

We have seen above in subsection \ref{subsect:BK potential} that every translationally equivariant minimal Lagrangian immersion can be obtained from some potential of the form 
\begin{equation*}
\eta = D(\lambda) dz,
\end{equation*}
where
\begin{equation*}
D(\lambda) = \lambda^{-1} D_{-1} + D_0 + \lambda D_1 \in \Lambda su(3)_\sigma.
\end{equation*}

The general loop group approach requires to consider the solution to $dC = C \eta, C(0,\lambda ) = I$. This is easily achieved by
$C(z,\lambda) = \exp(zD).$

Next one needs to perform an Iwasawa splitting.
In general this is very complicated and difficult to carry out explicitly.
But, for translationally equivariant minimal Lagrangian surfaces in $\mathbb{C}P^2$,  one is able to carry out an explicit Iwasawa decomposition of $\exp(zD)$.

In view of equation \eqref{equiF} we obtain 
\begin{equation}\label{eq:F(iy)}
\F (iy,\lambda)= e^{iyD} U_{+}(y,\lambda)^{-1}.
\end{equation}
Using  \eqref{eq:F(iy)} we obtain for the Maurer-Cartan form $\alpha = \F^{-1} d\F = Adx + B dy$ of $\F$ the equations
\begin{equation}\label{eq:A_lambda}
A_{\lambda}(y)=U_{+}(y,\lambda)DU_{+}(y,\lambda)^{-1},
\end{equation}
\begin{equation*} \label{eq:B_lambda}
B_{\lambda}(y)=U_{+}(y,\lambda)i DU_{+}(y,\lambda)^{-1}-\frac{d}{dy}U_{+}(y,\lambda) U_{+}(y,\lambda)^{-1}.
\end{equation*}

Writing, on the other hand, $\alpha = U + V$ with $U$ a $(1,0)-$form and $V$ a $(0,1)-$form, we obtain

\begin{equation} \label{eq:Equ1}
U_{+}(y,\lambda)DU_{+}^{-1}(y,\lambda)=
\lambda^{-1}U_{-1}+U_0+\lambda V_1+V_0
=:\Omega,
\end{equation}
\begin{equation} \label{eq:Equ2}
\frac{d}{dy}U_{+}(y,\lambda) U_{+}(y,\lambda)^{-1}=
2i(\lambda V_1+V_0).
\end{equation}
The above two equations are the basis for an explicit computation of the Iwasawa decomposition of 
$\exp(z D(\lambda))$.

It is important to note that
because $U_{+}$ only depends on $y$ and $\Omega$  
is of the form
$$\Omega
=\begin{pmatrix}
\frac{u_z-u_{\bar z}}{2}&-i\lambda \bar{\psi}e^{-u}&i\lambda^{-1}e^{\frac{u}{2}}\\
-i\lambda^{-1}\psi e^{-u}&-\frac{u_z-u_{\bar z}}{2}&i\lambda e^{\frac{u}{2}} \\
i\lambda e^{\frac{u}{2}}&i\lambda^{-1} e^{\frac{u}{2}}&0
\end{pmatrix},$$
both $u$ and $\psi$ also only depend on $y$. 

\begin{lemma}
If $f$ is a translationally equivariant minimal Lagrangian immersion  into $\C P^2$ with respect to  translations in $x$-direction, then the metric only depends on $y$ and the cubic Hopf differential has a constant coefficient.
\end{lemma}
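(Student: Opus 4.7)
The plan is to read off everything directly from the explicit shape of $\Omega$ displayed immediately above the lemma, using the single input that $\Omega$ depends only on $y$. Indeed, equation \eqref{eq:F(iy)} combined with \eqref{equiF} shows that the loop $U_+$ produced by the Iwasawa splitting of $\exp(zD)$ depends only on $y$ (and $\lambda$), and the constant matrix $D(\lambda)$ is independent of $z$ altogether. Hence by \eqref{eq:Equ1},
\[
\Omega = U_+(y,\lambda)\, D(\lambda)\, U_+(y,\lambda)^{-1}
\]
is a function of $y$ alone for each fixed $\lambda$. All of the content of the lemma will come from comparing individual matrix entries of the two sides of \eqref{eq:Equ1}, and these comparisons are justified entry by entry and power of $\lambda$ by power of $\lambda$, since $\Omega$ has already been presented in its Fourier-decomposed form $\lambda^{-1} U_{-1} + U_0 + V_0 + \lambda V_1$.

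I would then use the $(3,1)$-entry of $\Omega$, which equals $i\lambda\, e^{u/2}$. The left-hand side depends only on $y$, so $e^{u/2}$ and hence $u$ is a function of $y$ alone. This is exactly the statement that the induced metric $g = 2 e^u dz\, d\bar z$ depends only on $y$. For the cubic differential, I would use the $(2,1)$-entry of $\Omega$, namely $-i\lambda^{-1} \psi\, e^{-u}$; once again this entry depends on $y$ only, and, since $u$ has already been shown to depend on $y$ only, the coefficient $\psi$ of the Hopf differential is a function of $y$ only as well.

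The final step is to combine $\psi = \psi(y)$ with the Codazzi equation $\psi_{\bar z} = 0$ from \eqref{eq:Codazzi}. Using $\partial_{\bar z} = \tfrac{1}{2}(\partial_x + i\partial_y)$ together with $\psi_x = 0$ gives $\psi_{\bar z} = \tfrac{i}{2}\psi_y$, so holomorphicity forces $\psi_y = 0$, i.e.\ $\psi$ is a constant. I do not expect any real obstacle here: the whole argument is the extraction of two matrix entries from \eqref{eq:Equ1} plus a one-line application of the Codazzi equation. The only point requiring a little care is making sure the entries are compared at their correct powers of $\lambda$, but the already displayed decomposition of $\Omega$ makes this immediate.
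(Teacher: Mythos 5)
Your argument is correct and follows essentially the same route as the paper: the paper likewise deduces from \eqref{equiF} that $U_+$ depends only on $y$, so $\Omega=U_+DU_+^{-1}$ in \eqref{eq:Equ1} does too, whence $u$ and $\psi$ are functions of $y$ alone. Your explicit final step, combining $\psi_x=0$ with the Codazzi equation \eqref{eq:Codazzi} to force $\psi$ constant, is exactly the (implicit) conclusion of the paper's reasoning, just spelled out.
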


There will be two steps for the computation of the Iwasawa decomposition of $\exp(z D(\lambda))$.

{\bf Step 1:} Solve  equation \eqref{eq:Equ1} in any way one pleases
by some matrix $Q$.
Then $U_+$ and $Q$ satisfy 
\begin{equation*}
U_+ = Q E, \hspace{2mm} \text{ where}  \hspace{2mm} E \hspace{2mm}  \text{commutes with} \hspace{2mm} D.
\end{equation*}

{\bf Step 2:} Solve  equation \eqref{eq:Equ2}. This will generally only mean to carry out two integrations in one variable.

\subsection{Evaluation of the characteristic polynomial equations}

Step 1 mentioned above actually consists of two sub-steps.
First of all one determines $\Omega$ from $D$ and then one computes a solution $W$ to {\bf Step 1:} Solve the equation \eqref{eq:Equ1}.

In this section we will discuss the first sub-step. 
In our case we observe that $D$ and $\Omega$ are conjugate and therefore have the same characteristic polynomials.
Using the explicit form of $\Omega$ stated just above and 
writing $D$ in the form 
\begin{eqnarray*}
D=\begin{pmatrix}
\alpha &-\lambda \bar{b} & \lambda^{-1} a \\
\lambda^{-1} b &-\alpha&-\lambda \bar{a}\\
-\lambda \bar{a} &\lambda^{-1} a &0
\end{pmatrix}\in \Lambda_1 \subset \Lambda su(3)_{\sigma},
\end{eqnarray*}
where $\alpha, a$ and $b$ are constants,
\eqref{eq:Equ1} is equivalent to
\begin{eqnarray}
&&2e^{u}+|\psi|^2e^{-2u}+\frac{1}{4}(u^\prime)^2=-\alpha^2+2|a|^2+|b|^2=:\beta,\label{eq:equiv1}\\
&&\psi=-ia^2b,\label{eq:equiv2}
\end{eqnarray}
where $\alpha, a, b$ and $\psi$ are constants. 

\begin{remark}
It has been noticed long time ago that the cases $\psi = 0$ and $u =$constant are related with very special surfaces. As pointed out above, the case $\psi=0$ implies that the surface is an open portion of $\R P^2$. The case $u =$ constant yields a flat surface thus is, up to isometries, an open portion of the Clifford torus (\cite{LOY}). Both cases can be treated like the general case below. In the first case, as already mentioned, one needs to use hyperbolic solutions
and in the second case one needs to use constant solutions
 to the elliptic equations occurring in this context.
At any rate, from here on (unless stated explicitly otherwise) we will assume that $\psi$ is not identically $0$ and $u$ is not constant. In particular, we will assume $u^{\prime}\not \equiv 0$.
\end{remark}

\subsection{Explicit solutions for metric and cubic form in terms of Weierstrass $\wp-$functions}

We start by noticing that \eqref{eq:equiv1} is a first integral of the Gauss equation
\begin{equation}\label{eq:GaussODE0}
\frac{1}{4}u^{\prime\prime}+e^u-|\psi|^2 e^{-2u}=0.
\end{equation}

Making the change of variables $w=e^{u}$ in \eqref{eq:equiv1}, we obtain equivalently
\begin{equation}\label{eq:w0}
(w^{\prime})^2+8w^3-4 \beta w^2 +4|\psi|^2=0.
\end{equation}
Set $w(y)=\frac{\beta}{6}-\frac{v(y)}{2}$. We obtain the fundamental differential equation
\begin{equation*}
(v^{\prime})^2=4v^3-g_2 v-g_3
\end{equation*}
of the Weierstrass function $\wp(z)=\wp(z; g_2,g_3)$ with
\begin{equation*}
g_2=\frac{4}{3}\beta^2, \quad\quad g_3=16|\phi|^2-\frac{8}{27}\beta^3.
\end{equation*}
Thus the general non-constant solution to \eqref{eq:w0} can be given by
\begin{equation*}
w(y)=\frac{\beta}{6}-\frac{\wp(y-y_0;g_2,g_3)}{2}
\end{equation*}
for a constant $y_0$.

 Since the Weierstrass elliptic function is periodic and bounded along the real line now, there exists a point,  where the derivative of $u$ vanishes. 
Choosing this point as the origin, we can always assume $u^{\prime}(0)=0$, which leads to
\begin{equation}\label{initial condition}
w^{\prime}(0)=0.
\end{equation}

This convention in combination with \eqref{eq:equiv1}  implies
\begin{equation*}\label{eq:equia1beta}
2a_1+\frac{|\psi|^2}{a_1^2}=\beta,
\end{equation*}
where $a_1:=e^{u(0)}>0$. 
Considering $\beta$ as a function of $a_1$, one can easily see that $\beta^3\geq 27|\psi|^2$. 
Thus the discriminant of the cubic equation 
\begin{equation}\label{W-equation}
4v^3-g_2 v-g_3=0
\end{equation}
satisfies
\begin{equation}\label{Delta-e}
\Delta=g_2^3-27 g_3^2=256 |\psi|^2 (\beta^3-27|\psi|^2)>0,
\end{equation}
if and only if $\psi \neq 0$ and $\beta^3\neq 27|\psi|^2$.

\begin{remark}\label{rem:4}
When $\psi=0$,  $g_2=\frac{4\beta^2}{3}>0$, $g_3=-\frac{8}{27}\beta^3<0$ and  the roots of the Weierstrass equation \eqref{W-equation} are 
$$e_1=e_2=\frac{\beta}{3}, \quad e_3=-\frac{2\beta}{3},$$
and the Weierstrass elliptic function $\wp(z; g_2, g_3)$ is reduced to (cf. 18.12.3, \cite{AS})
$$\wp(z; g_2,g_3)=\frac{\beta}{3}+\beta [\sinh (\sqrt{\beta}z)]^{-2}.$$
Thus the initial condition \eqref{initial condition} gives the following solution to \eqref{eq:w0}
$$w(y)=\frac{\beta}{2\cosh^2(\sqrt{\beta}y)}.$$
This is nothing but the metric of the real projective plane in $\mathbb{C}P^2$.

When $\beta^3=27|\psi|^2$, $g_2=\frac{4\beta^2}{3} >0$, $g_3=\frac{8}{27}\beta^3 >0$ and there also are two equal real roots of \eqref{W-equation} given by (cf. 18.12.25, \cite{AS})
$$e_1=\frac{2\beta}{3}, \quad e_2=e_3=-\frac{\beta}{3},$$
then we obtain the three roots of 
\begin{equation}\label{w-Wequation}
8w^3-4\beta w^2+4|\psi|^2=0
\end{equation}
given by
 $$a_1=a_2=\frac{\beta}{3}, \quad a_3=-\frac{\beta}{6}.$$
Thus the solution of \eqref{eq:w0} is a constant function $w(y)\equiv \frac{\beta}{3}$,
which corresponds to a flat minimal Lagrangian surface and has been ruled out in the beginning of our discussion.
\end{remark}

Now for the general case,  $g_2$ and $g_3$ are real and $\Delta>0$, thus there are three distinct non-zero real roots of  \eqref{W-equation}, denoted by
$$e_1>e_2>e_3.$$
Because of the initial condition \eqref{initial condition}, we know from \eqref{eq:w0}  that
$$e^{u(0)}=w(0)=:a_1$$
is a root of   \eqref{w-Wequation}. 
And now we assume that this is the largest root of  \eqref{w-Wequation}.
Recall that the half-periods $\omega$, $\omega^{\prime}$ and $\omega+\omega^{\prime}$ of the Weierstrass elliptic function are related to the roots
$e_1, e_2$ and $e_3$ by 
$$\wp(\omega)=e_1,\qquad \wp(\omega+\omega^{\prime})=e_2,\qquad \wp(\omega^{\prime})=e_3.$$
Consequently, $\wp^{\prime}(\omega)=\wp^{\prime}(\omega^{\prime})=\wp^{\prime}(\omega+\omega^{\prime})=0$.
The initial condition \eqref{initial condition} thus yields the particular solution of \eqref{eq:w0} given by
\begin{equation}\label{sol}
e^{u(y)}=w(y)=\frac{\beta}{6}-\frac{\wp(y-\omega^{\prime}; g_2,g_3)}{2}.
\end{equation}

Remark that now  
the half-period $$\omega=\int_{e_1}^{\infty}\frac{dt}{\sqrt{4t^3-g_2 t-g_3}}$$ is real, whereas the other half-period 
$$\omega^{\prime}
=i \int^{e_3}_{-\infty} \frac{dt}{\sqrt{|4t^3-g_2 t-g_3|}}$$ is purely imaginary.
It is easy to see that the solution $u(y)$ inherits
from the Weierstrass elliptic function the following properties:
\begin{itemize}
\item[(1)] $u(y+2\omega)=u(y), $
\item[(2)] $u(-y)=u(y)$,
\item[(3)] $u(\omega)=\log a_2$ and $u^{\prime}(\omega)=0$, where $a_2=\frac{\beta}{6}-\frac{e_2}{2}>0$.
\end{itemize}
In particular,  $\hat{u}(y)=u(y+\omega)$ is also a solution to \eqref{eq:GaussODE0} with 
$\hat{u}^{\prime}(0)=0$.

Thus  for any (in $x-$direction) translationally equivariant minimal Lagrangian surface in $\mathbb{C}P^2$, its metric conformal factor $e^u$ is given by  \eqref{sol} in terms of a Weierstrass elliptic function and its cubic Hopf differential is constant and given by \eqref{eq:equiv2}.

\bigskip

For our loop group setting the assumption $u^{\prime}(0)=0$ has an important consequence:

\begin{theorem}
By choosing the coordinates such that the metric for a given translationally equivariant minimal Lagrangian immersion has a vanishing derivative at $z = 0$, we obtain that the generating matrix $D$ satisfies $D_0 = 0$.
\end{theorem}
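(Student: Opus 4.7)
The plan is to evaluate equation \eqref{eq:Equ1} at $y=0$ and read off its constant-in-$\lambda$ component.

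First, I would observe that the normalization $\F(0,\lambda)=I$ from Theorem \ref{essential-transfo}, combined with \eqref{eq:F(iy)}, forces $U_+(0,\lambda)=I$. Substituting $y=0$ into \eqref{eq:Equ1} then yields the identity
\begin{equation*}
D(\lambda) \;=\; \Omega(0,\lambda).
\end{equation*}

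Second, I would read off the $\lambda^{0}$ part of $\Omega$ from the explicit matrix displayed just before the lemma on the metric: its diagonal entries are $\pm\tfrac{1}{2}(u_z - u_{\bar z})$ with a $0$ in the third slot, and the off-diagonal entries belong to the $\lambda^{\pm 1}$ components. Since $u$ depends only on $y$, one has $u_z - u_{\bar z} = -i\,u'(y)$, and the $\lambda^{0}$ coefficient of $\Omega(0,\lambda)$ reduces to $-\tfrac{i}{2}u'(0)\,\mathrm{diag}(1,-1,0)$.

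Finally, the coordinate choice $u'(0)=0$ forces this $\lambda^{0}$ component to vanish, and matching $\lambda$-powers on the two sides of $D(\lambda)=\Omega(0,\lambda)$ yields $D_0 = 0$, i.e. the parameter $\alpha$ in the explicit form of $D$ is zero.

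The whole argument is essentially a one-line evaluation, so I do not anticipate any genuine obstacle. The only subtlety worth double-checking is the identity $U_+(0,\lambda)=I$: one needs to verify that the gauge fixed in Section \ref{subsect:BK potential} is compatible with the decomposition \eqref{eq:F(iy)}, so that no stray initial factor appears at $y=0$. This follows immediately from the normalizations $\F(0,\lambda)=I$ and $e^{0\cdot D(\lambda)}=I$, together with the positive-diagonal uniqueness convention in Theorem \ref{Thm:Iwasawa}.
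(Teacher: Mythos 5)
Your proposal is correct and follows essentially the same route as the paper: the paper likewise identifies $D$ with $A_\lambda(0)=\Omega|_{y=0}$ (using $U_+(0,\lambda)=I$ from the normalization $\F(0,\lambda)=I$ in \eqref{equiF}), so that the diagonal ($\lambda^0$) part of $D$ is $-\tfrac{i}{2}u'(0)\,\mathrm{diag}(1,-1,0)$, which vanishes under the coordinate choice $u'(0)=0$. Your extra remark checking $U_+(0,\lambda)=I$ via the uniqueness convention in Theorem \ref{Thm:Iwasawa} is a harmless and accurate elaboration of what the paper leaves implicit.
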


We will therefore always assume this condition from here on.

\subsection{Solving equation \eqref{eq:Equ1}}

The main goal of this subsection is to find some \lq\lq sufficiently nice\rq\rq matrix function $Q$ satisfying \eqref{eq:Equ1}, i.e.
\begin{equation}\label{eq:QDQ-1}
QDQ^{-1}=\Omega.
\end{equation}

Recall that for translationally equivariant minimal Lagrangian surfaces, the potential matrix $D$ coincides with $A_{\lambda}(0)=\Omega|_{y=0}$ of \eqref{eq:A_lambda} so we have 
(including the convention above about the origin)
\begin{equation*}\label{eq:D=Alambda0}
D=\begin{pmatrix}
0&-i\lambda\bar{\psi}e^{-u(0)}&i\lambda^{-1}e^{\frac{u(0)}{2}}\\
-i\lambda^{-1}\psi e^{-u(0)}&0&i\lambda e^{\frac{u(0)}{2}}\\
i\lambda e^{\frac{u(0)}{2}}&i\lambda^{-1} e^{\frac{u(0)}{2}}&0
\end{pmatrix},
\end{equation*}
where  $\alpha=-\frac{iu^{\prime}(0)}{2}=0$, $a=ie^{\frac{u(0)}{2}}$
and $b=-i\psi e^{-u(0)}$.
We may summarize the following proposition:
\begin{proposition} Up to isometries in $\mathbb{C}P^2$, any translationally equivariant minimal Lagrangian surface can be generated by a potential of the form 
\begin{equation}\label{eq:Dalpha=0}
\begin{pmatrix}
0&-\lambda \bar{b}&\lambda^{-1}a\\
\lambda^{-1}b&0&-\lambda \bar{a}\\
-\lambda\bar{a}&\lambda^{-1} a&0
\end{pmatrix} dz,
\end{equation}
where $a$ is purely imaginary and both $a$ and $b=\frac{i\psi}{a^2}$ are constants.
\end{proposition}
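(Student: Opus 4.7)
The proposition essentially collects all the structural information we have already extracted for translationally equivariant minimal Lagrangian surfaces, so the plan is simply to assemble those pieces in the correct order. First I would invoke Theorem~\ref{essential-transfo} (Burstall--Kilian), which says that, after normalizing $\F(0,\lambda) = I$, the extended frame of any translationally equivariant minimal Lagrangian immersion satisfies $\F(t+z,\lambda) = e^{tD(\lambda)}\F(z,\lambda)$ for some constant $D(\lambda) \in \Lambda su(3)_\sigma$. Writing $D$ in $3\times 3$ block form compatible with the twisting $\sigma$ and the reality condition for $su(3)$ produces precisely the parameterization
\begin{equation*}
D(\lambda)=\begin{pmatrix}
\alpha & -\lambda \bar b & \lambda^{-1} a \\
\lambda^{-1} b & -\alpha & -\lambda \bar a \\
-\lambda \bar a & \lambda^{-1} a & 0
\end{pmatrix},
\end{equation*}
with $\alpha \in \mathbb{R}$ and $a,b \in \mathbb{C}$ constant. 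The constancy of $a,b$ (and hence of $\psi$, via \eqref{eq:equiv2}) is the content of the Lemma above, so translational equivariance in the $x$-direction has already been used.

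Second, I would apply the normalization theorem immediately preceding the proposition: by choosing the origin of the transverse variable $y$ to coincide with a critical point of $u$, one achieves $u'(0) = 0$, and this forces $D_0 = 0$. Since $D_0 = \mathrm{diag}(\alpha,-\alpha,0)$ (as seen from the relation $\alpha = -iu'(0)/2$ recorded after \eqref{eq:D=Alambda0}), the equality $D_0 = 0$ is equivalent to $\alpha = 0$. This reduces $D$ to the two-term Laurent polynomial displayed in \eqref{eq:Dalpha=0}.

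Third, I would pin down $a$ and $b$ by comparing $D$ with $\Omega\big|_{y=0}$. Indeed, the normalization $U_+(0,\lambda) = I$ inherent in \eqref{eq:F(iy)} together with \eqref{eq:Equ1} yields $D = \Omega\big|_{y=0}$; reading the $(1,3)$-entry of $\Omega$ gives $a = ie^{u(0)/2}$, which is manifestly purely imaginary, and in particular $a^2 = -e^{u(0)}$. Substituting this into the algebraic relation $\psi = -ia^2 b$ of \eqref{eq:equiv2} yields $b = i\psi/a^2$, as claimed.

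Finally, the phrase \emph{up to isometries in $\mathbb{C}P^2$} accounts for the two normalizations that were not purely analytic: the choice $\F(0,\lambda) = I$ is achieved by an ambient isometry of $\mathbb{C}P^2$, and the shift of the $y$-origin to a critical point of $u$ is a coordinate translation that preserves the $x$-equivariance. The only step requiring a little care is checking that these two normalizations can be imposed simultaneously without over-determining the setup; this is immediate because the translation invariance in $x$ commutes with the shift in $y$, so one can first translate $y$ to a zero of $u'$ and then gauge the frame at the new origin. No genuine obstacle arises beyond this bookkeeping, and the proof reduces to quoting the results already established.
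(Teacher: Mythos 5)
Your proposal is correct and follows essentially the same route as the paper: the authors likewise obtain the constant potential $D(\lambda)dz$ from the Burstall--Kilian theorem, use the convention $u^{\prime}(0)=0$ (so $D_0=0$, i.e. $\alpha=0$) and the identification $D=\Omega|_{y=0}$ coming from $U_{+}(0,\lambda)=I$ to read off $a=ie^{\frac{u(0)}{2}}$ and $b=-i\psi e^{-u(0)}=\frac{i\psi}{a^{2}}$. The only cosmetic slip is your claim $\alpha\in\mathbb{R}$: since $D\in\Lambda su(3)_{\sigma}$ is skew-Hermitian for $\lambda\in S^{1}$, the diagonal parameter $\alpha$ is purely imaginary (indeed $\alpha=-\tfrac{i}{2}u^{\prime}(0)$), which is harmless here because $\alpha$ is set to zero anyway.
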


Thus the characteristic polynomial of $D$ in \eqref{eq:Dalpha=0} is given by
\begin{equation*} \label{eq:charpol}
\det(\mu I-D(\lambda))=\mu^3+\beta \mu-2i \mathrm{Re}(\lambda^{-3}\psi).
\end{equation*}

\begin{remark}\label{rem:six lambda}
It is easy to derive from \eqref{Delta-e} that the discriminant of the above polynomial satisfies
$$\Delta=(\frac{\beta}{3})^3-[\mathrm{Re}(\lambda^{-3}\psi)]^2\geq (\frac{\beta}{3})^3-|\psi|^2\geq 0.$$
The second equal sign holds when $\lambda^{-3}\psi$ is real and the third one holds only for special cases which we excluded.
Hence for the general case when $\Delta>0$, $D(\lambda)$ has three distinct purely imaginary roots for any choice of $\lambda^{-3}\psi$. Moreover, the root $0$ occurs if and only if $\lambda^{-3}\psi$ is purely imaginary. This case  can only happen for six different values of $\lambda$ (See Lemma 5.3 in \cite{DorfMa1}).  
\end{remark}

Denote the eigenvalues of $D(\lambda)$ by $\mu_1=i d_1$, $\mu_2=i d_2$, $\mu_3=i d_3$. 

The following relations will be frequently used later.
$$d_1+d_2+d_3=0, \quad d_1d_2+d_2d_3+d_3 d_1=-\beta, \quad d_1d_2d_3=-2\mathrm{Re}(\lambda^{-3}\psi).$$

Now take 
\begin{equation}\label{eq:Q_0}
Q_0=\mathrm{diag}(i a^{-1} e^{\frac{u}{2}}, -i a e^{-\frac{u}{2}},1),
\end{equation}
such that 
$$\hat{\Omega}=Q_0^{-1}\Omega Q_0=\begin{pmatrix}
-\frac{iu^{\prime}}{2}&i\lambda \bar{\psi}a^2 e^{-2u}&\lambda^{-1}a\\
\lambda^{-1}b&\frac{iu^{\prime}}{2}&-\lambda a^{-1}e^u\\
-\lambda a^{-1}e^u&\lambda^{-1}a&0
\end{pmatrix}$$
has the same coefficients at $\lambda^{-1}$ as $D(\lambda)$. 

Consider now  a $3\times 3$ matrix $\hat{Q}$ given by
\begin{equation}\label{eq:Q_assumption}
\hat{Q}=\begin{pmatrix}
A&\gamma\\
0&c
\end{pmatrix}, \quad A=\begin{pmatrix}p&q\\s&t\end{pmatrix},\quad \gamma=\begin{pmatrix} v_1\\v_2\end{pmatrix},
\end{equation}
where $c$ is a scalar. Put
$$D=\begin{pmatrix}
E&\xi\\
-\bar{\xi}^t&0
\end{pmatrix}, \quad 
\hat{\Omega}=\begin{pmatrix}
\Omega^{\prime}&\eta\\
\zeta&0
\end{pmatrix},$$
where $E$ and $\Omega^{\prime}$ are $2\times 2$ matrices, $\xi$ and $\eta$ are $2\times 1$ matrices, and $\zeta$ is a $1\times 2$ matrix.

Then $\hat{Q}D\hat{Q}^{-1}= \hat{\Omega}$ is equivalent to the following equations
\begin{eqnarray}
&&AEA^{-1}-\gamma \bar{\xi}^t A^{-1}=\Omega^{\prime}, \label{eq:Q1}\\
&&-(AEA^{-1}\gamma-\gamma\bar{\xi}^t A^{-1}\gamma)+A\xi=c\eta,
\label{eq:Q2}\\
&&-c\bar{\xi}^t A^{-1}=\zeta,
\label{eq:Q3}\\
&&\bar{\xi}^t A^{-1}\gamma=0. \label{eq:Q4}
\end{eqnarray}
Inserting \eqref{eq:Q3} into \eqref{eq:Q4} gives $\zeta \gamma=0$, which implies that
$$\gamma=\begin{pmatrix}\lambda^{-1}a^2\\\lambda e^u\end{pmatrix} h,$$
where $h$ is an arbitrary factor.
Noticing that \eqref{eq:Q1} is equivalent to $AE-\gamma\bar{\xi}^t=\Omega^{\prime} A$ and inserting the assumption \eqref{eq:Q_assumption}, we obtain the following equivalent equations
\begin{eqnarray}
&&\frac{iu^{\prime}}{2}p +\lambda^{-1} bq-i\lambda a^2 \bar{\psi} e^{-2u} s-|a|^2ah=0,\label{eq:Q6}\\
&&-\lambda\bar{b}p+\frac{i u^{\prime}}{2} s-i\lambda a^2 \bar{\psi}e^{-2u}t+\lambda^{-2} a^3 h=0,\label{eq:Q7}\\
&&-\lambda^{-1}bp-\frac{i u^{\prime}}{2}s+\lambda^{-1}bt-\lambda^2 \bar{a}e^u h=0,\label{eq:Q8}\\
&&-\lambda^{-1}bq-\lambda\bar{b}s-\frac{i u^{\prime}}{2} t+ae^u h=0.\label{eq:Q9}
\end{eqnarray}

Multiplying both sides of \eqref{eq:Q6} by $-\frac{iu^{\prime}}{2}$, \eqref{eq:Q7} by $\lambda^{-1}b$ and adding them together, we infer
\begin{equation*}
\begin{split}
&(\frac{(u^{\prime})^2}{4}-|b|^2)p+i\lambda a^2\bar{\psi}e^{-2u}\frac{iu^\prime}{2}s-ia^2b\bar{\psi}e^{-2u}t\\
&\quad\quad\quad \quad\quad\quad +(\lambda^{-3} a^3 b+|a|^2 a\frac{iu^\prime}{2})h=0.
\end{split}
\end{equation*}
Multiplying \eqref{eq:Q8} by $i\lambda a^2 \bar{\psi}e^{-2u}$ and adding the above equation to eliminate $s$ and $t$, we obtain
\begin{eqnarray*}
p&=&\frac{|a|^2a(-\frac{iu^{\prime}}{2}+i\lambda^3 \bar{\psi}e^{-u})-\lambda^{-3}a^3b}{\frac{(u^\prime)^2}{4}-|b|^2+|\psi|^2 e^{-2u}}h.\\
\end{eqnarray*}
Similarly, multiplying both sides of \eqref{eq:Q6} by $\lambda \bar{b}$, \eqref{eq:Q7} by  $\frac{iu^{\prime}}{2}$ and adding them together, we get
\begin{equation*}
\begin{split}
&(|b|^2-\frac{(u^\prime)^2}{4})q-i\lambda^2 a^2\bar{b}\bar{\psi}e^{-2u}s-i\lambda a^2 \bar{\psi}e^{-2u}\frac{iu^\prime}{2} t\\
&\quad\quad\quad \quad\quad\quad+(-\lambda |a|^2 a\bar{b}+\lambda^{-2}a^3 \frac{iu^{\prime}}{2})h=0.
\end{split}
\end{equation*}
Multiplying \eqref{eq:Q9} by $i\lambda a^2\bar{\psi}e^{-2u}$ and subtracting the above equation to eliminate $s$ and $t$, we conclude
$$q=\frac{\lambda^{-2} \frac{iu^{\prime}}{2}a^3-\lambda|a|^2 a\bar{b}-i\lambda a^3 \bar{\psi}e^{-u}}{
\frac{(u^\prime)^2}{4}-|b|^2+|\psi|^2 e^{-2u}}h.$$
Multiplying \eqref{eq:Q8} by $\frac{iu^{\prime}}{2}$, \eqref{eq:Q9} by $\lambda^{-1}b$, and adding them together yields
\begin{equation*}
\begin{split}
&-\lambda^{-1}b \frac{iu^\prime}{2}p-\lambda^{-2}b^2q+(\frac{(u^\prime)^2}{4}-|b|^2)s\\
&\quad\quad\quad \quad\quad\quad+(-\lambda^2\bar{a}e^u \frac{iu^\prime}{2}+ae^u \lambda^{-1}b)h=0.
\end{split}
\end{equation*}
Multiplying \eqref{eq:Q6} by $\lambda^{-1}b$ and adding the above equation to eliminate $p$ and $q$ results in
$$s=\frac{\lambda^2\bar{a}e^u \frac{iu^{\prime}}{2}-\lambda^{-1}abe^u+\lambda^{-1}|a|^2 ab}{\frac{(u^\prime)^2}{4}-|b|^2+|\psi|^2 e^{-2u}}h.$$

Finally, multiplying \eqref{eq:Q8} by $\lambda \bar{b}$, \eqref{eq:Q9} by $-\frac{iu^\prime}{2}$ and adding them together, we arrive at
\begin{equation*}
\begin{split}
&-|b|^2p+\lambda^{-1}b \frac{iu^\prime}{2} q+(|b|^2-\frac{(u^\prime)^2}{4})t
+(-\lambda^3\bar{a}\bar{b} e^u -ae^u \frac{iu^\prime}{2})h=0.
\end{split}
\end{equation*}
Multiplying \eqref{eq:Q7} by $\lambda^{-1}b$ and subtracting the above equation to eliminate $p$ and $q$, we obtain
$$t=\frac{-ae^u \frac{iu^{\prime}}{2}-\lambda^3\bar{a}\bar{b}e^u-\lambda^{-3}a^3b}{\frac{(u^\prime)^2}{4}-|b|^2+|\psi|^2 e^{-2u}}h.$$

Due to \eqref{eq:Q1}, the equation \eqref{eq:Q2} is equivalent to $-\Omega^{\prime}\gamma+A\xi=c\eta$. Moreover,  \eqref{eq:Q3} is equivalent to $-c\bar{\xi}^t=\zeta A$.
Substituting \eqref{eq:Q_assumption} into these two equations, we arrive at the following system of equations 
\begin{eqnarray}
&&\lambda^{-1}ap-\lambda \bar{a} q+(\lambda^{-1}\frac{iu^\prime}{2}-i\lambda^2 \bar\psi e^{-u})a^2h=\lambda^{-1}ac,\label{eq:Q10}\\
&&\lambda^{-1} a s-\lambda \bar{a}t-(\lambda^{-2}a^2b+\lambda \frac{i u^{\prime}}{2} e^u)h=-\lambda a^{-1} e^u c, \label{eq:Q11}\\
&&-\lambda a^{-1} e^u p+\lambda^{-1} as =-\lambda \bar{a} c, \label{eq:Q12}\\
&&-\lambda a^{-1} e^u q+\lambda^{-1}a t =\lambda^{-1}ac. \label{eq:Q13}
\end{eqnarray}
Note, since $a \neq 0$, equation \eqref{eq:Q13} yields  $c=t-\lambda^2 a^{-2} e^u q$.
Substituting the expression for $q$ derived above, we obtain 
$$c=\frac{ia(\lambda^3 \bar\psi-\lambda^{-3}\psi-e^u u^{\prime})}{\frac{(u^\prime)^2}{4}-|b|^2+|\psi|^2 e^{-2u}}h.$$ 
By a  direct computation, we see that \eqref{eq:Q12} is an identity and \eqref{eq:Q10} and \eqref{eq:Q11} are both equivalent to \eqref{eq:equiv1}.

In view of  equations \eqref{eq:equiv1} and \eqref{eq:equiv2}, we obtain
\begin{equation}\label{eq:checkQ}
\hat{Q}=\frac{iah}{2(|a|^2-e^u)} \check{Q}, \quad 
\check{Q}=\begin{pmatrix}
\check{p}&\check{q}&\check{v}_1\\ 
\check{s}&\check{t}&\check{v}_2\\
0&0&\check{c}
\end{pmatrix},
\end{equation}
where
\begin{equation}\label{eq:pqstvc}
\begin{split}
\check{p}&= -|a|^2 \frac{u^{\prime}}{2}+\lambda^3 \bar{\psi}|a|^2 e^{-u}-\lambda^{-3}\psi,\\
\check{q}&= \frac{\lambda^{-2}a}{\bar{a}}[\frac{u^{\prime}}{2} |a|^2 -\lambda^3\bar{\psi}e^{-u}(|a|^2-e^u)],\\
\check{s}&=\frac{\lambda^2}{a^2}[|a|^2 \frac{u^{\prime}}{2}e^u+\lambda^{-3} \psi (|a|^2- e^u)],\\
\check{t}&=\frac{1}{|a|^2} (-|a|^2 \frac{u^{\prime}}{2}e^u+\lambda^3 \bar{\psi}e^u -\lambda^{-3}\psi |a|^2),\\
\check{v}_1&=-2i\lambda^{-1}a (|a|^2-e^u),\\
\check{v}_2&=-2i \lambda a^{-1} e^u (|a|^2-e^u),\\
\check{c}&=\lambda^3 \bar{\psi}-\lambda^{-3}\psi-e^u u^{\prime}.
\end{split}
\end{equation}

So far we did not impose any restrictions on $\hat{Q}$. In particular, we ignored possible poles in $\lambda$ and in $z$.
 It is easy to verify that all matrix entries of $\hat{Q}$ are defined for sufficiently small $\lambda\in \C^*$.
In addition, we would like to impose now the condition for $\hat{Q}$ to have determinant $1$.
Computing this determinant we obtain
\begin{equation*}\label{eq:det}
(\check{p}\check{t}-\check{q}\check{s})\check{c} = (\lambda^3 \bar{\psi}-\lambda^{-3}\psi
- e^u u^{\prime})^2( \lambda^3 \bar{\psi}-\lambda^{-3}\psi)\in \mathbb{C}.
\end{equation*}

For small $\lambda\in \mathbb{C}^*$, 
we define
\begin{equation} \label{eq:Q}
\tilde{Q}=\frac{\lambda^3}{\kappa} \check{Q},
\end{equation}
where $\kappa=(\lambda^6 \bar{\psi}-\psi -\lambda^3e^u u^{\prime})^{2/3}( \lambda^6 \bar{\psi}-\psi)^{1/3}$.
Then $\det \tilde{Q}=1$
and   $\tilde{Q}(0,\lambda)=Q_0(0,\lambda)=I$ due to $a=ie^{\frac{u(0)}{2}}$.
Moreover, $\tilde{Q}$ is holomorphic in $\lambda$ in a small disk about $\lambda = 0$.
If $\lambda$ is small, the denominator of the coefficient of $\tilde{Q}$ single-valued.
Altogether we have found a solution to 
equation \eqref{eq:QDQ-1} by $Q=Q_0 \tilde{Q}$.

\subsection{Solving equation \eqref{eq:Equ2}}
\label{subsect:solve-2}

Since also $U_+$ has the same properties as $Q$, we obtain that $E = Q^{-1} U_+$ 
has determinant $1$, attains the value $I$ for $z=0$, is holomorphic for all small $\lambda$ and satisfies
$[Q^{-1}U_{+}, D]=0$.

By Remark \ref{rem:six lambda}
we can assume without loss of generality that $D=D(\lambda )$  is regular semi-simple for all but finitely many values of $\lambda$. Therefore, for all 
$z$ and small $\lambda$ we can write
$E = \exp ( \mathcal{E} ) $, where $[\mathcal{E}, D]=0.$

Since, in the computation of $Q$, we did not worry about the twisting condition, the matrix $\mathcal{E}$ is possibly an untwisted loop matrix in $SL(3,\C)$.  But since $SL(3,\C)$ has rank $2$, for any regular semi-simple matrix $D=D(\lambda),$ the commutant of $D(\lambda)$ is spanned by $D(\lambda)$ and one other matrix.

\begin{lemma} Every element in the commutant 
$\{X\in \Lambda sl(3,\mathbb{C})_{\sigma}: [X,D]=0\}$
of $D(\lambda)$ has the form 
$X(\lambda)=\kappa_1(\lambda) D(\lambda)+\kappa_2(\lambda)L_0(\lambda)$ with 
$\kappa_1(\epsilon\lambda)=\kappa_1(\lambda)$, $\kappa_2(\epsilon\lambda)=-\kappa_2 (\lambda)$,
where $L_0=D^2(\lambda) -\frac{1}{3} \mathrm{tr}(D^2)  I$.
\end{lemma}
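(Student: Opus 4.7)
The plan is to reduce the question to a pointwise statement about centralizers in $sl(3,\mathbb{C})$ and then use the loop-algebra twist to constrain the coefficient functions.

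First I would invoke Remark~\ref{rem:six lambda}, which guarantees that $D(\lambda)$ is regular semi-simple for all but finitely many $\lambda$ (three distinct non-zero eigenvalues). At any such regular $\lambda$, the centralizer of $D(\lambda)$ inside $sl(3,\mathbb{C})$ is a two-dimensional abelian Cartan subalgebra and coincides with the trace-free part of the polynomial algebra $\mathbb{C}[D(\lambda)]$. Since $D(\lambda)$ is traceless, this 2-dimensional subspace is spanned by $D(\lambda)$ itself and by the trace-adjusted square $L_0(\lambda) = D(\lambda)^2 - \tfrac{1}{3}\mathrm{tr}(D(\lambda)^2)I$. Hence, at every regular $\lambda$, any $X(\lambda)$ commuting with $D(\lambda)$ admits a unique representation $X(\lambda)=\kappa_1(\lambda)D(\lambda)+\kappa_2(\lambda)L_0(\lambda)$, and standard linear-algebra continuity (Cramer's rule applied to the two-dimensional span) shows that $\kappa_1,\kappa_2$ are meromorphic in $\lambda$ with at worst poles at the discrete set where $D(\lambda)$ fails to be regular.

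Next I would check that both $D$ and $L_0$ themselves lie in $\Lambda sl(3,\mathbb{C})_{\sigma}$. For $D$ this is the hypothesis. For $L_0$, the computation
\begin{equation*}
L_0(\epsilon\lambda)=D(\epsilon\lambda)^{2}-\tfrac{1}{3}\mathrm{tr}(D(\epsilon\lambda)^{2})I
=\sigma(D(\lambda))^{2}-\tfrac{1}{3}\mathrm{tr}(\sigma(D(\lambda))^{2})I
=\sigma(L_0(\lambda))
\end{equation*}
uses only that $\sigma$ is a Lie-algebra automorphism (so it preserves products and fixes $I$) and that the trace is $\sigma$-invariant. In particular, $\mathrm{tr}(D^2)$ is itself a $\sigma$-symmetric scalar function of $\lambda$ (in fact constant, equal to $-2\beta$, as a consequence of the characteristic polynomial computed earlier).

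With these ingredients in place, I would impose the twisting condition $X(\epsilon\lambda) = \sigma(X(\lambda))$ on the decomposition:
\begin{equation*}
\kappa_1(\epsilon\lambda)\sigma(D(\lambda)) + \kappa_2(\epsilon\lambda)\sigma(L_0(\lambda))
=\kappa_1(\lambda)\sigma(D(\lambda)) + \kappa_2(\lambda)\sigma(L_0(\lambda)).
\end{equation*}
Because $\sigma$ is a vector-space isomorphism, $\sigma(D(\lambda))$ and $\sigma(L_0(\lambda))$ remain linearly independent at each regular $\lambda$, so one reads off transformation laws for $\kappa_1$ and $\kappa_2$ under $\lambda \mapsto \epsilon\lambda$ by coefficient comparison. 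The stated parity conditions then follow once one accounts for the $\lambda$-weight of $L_0$ relative to $D$ (explicitly, $L_0$ contains $\lambda^{\pm 2}$ pieces while $D$ contains only $\lambda^{\pm 1}$ pieces), which contributes the sign in $\kappa_2(\epsilon\lambda)=-\kappa_2(\lambda)$ via the identity $\epsilon^3=-1$ in the authors' sixth-root normalization of $\epsilon$.

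The main obstacle is this last sign-bookkeeping step: one must be careful that the normalization of $L_0$ (with its implicit $\lambda$-weight coming from the $D^2$ piece and the $\sigma$-grading) interacts correctly with the evaluation at $\epsilon\lambda$. A secondary but routine issue is extending $\kappa_1,\kappa_2$ across the finitely many exceptional $\lambda$; since $X$ itself lies in the loop algebra and is therefore holomorphic/meromorphic globally, the transformation laws obtained on the regular set extend to the full domain by continuity.
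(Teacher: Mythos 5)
Your pointwise reduction is fine: at each $\lambda$ where $D(\lambda)$ is regular semisimple (all but finitely many, by Remark \ref{rem:six lambda}) the centralizer in $sl(3,\C)$ is the two-dimensional Cartan subalgebra spanned by $D(\lambda)$ and $L_0(\lambda)$, and the coefficients $\kappa_1,\kappa_2$ depend meromorphically on $\lambda$ and extend across the exceptional set. The genuine error is in your equivariance computation for $L_0$. The displayed identity $L_0(\epsilon\lambda)=\sigma(L_0(\lambda))$ is false in this setting, and its justification --- ``$\sigma$ is a Lie-algebra automorphism, so it preserves products'' --- confuses Lie brackets with matrix products. The order-six twisting automorphism used here contains the outer automorphism $X\mapsto -X^{t}$ (composed with an inner piece), which \emph{anti}-preserves matrix multiplication; consequently $\sigma(D(\lambda))^{2}=-\sigma\bigl(D(\lambda)^{2}\bigr)$ and one gets $L_0(\epsilon\lambda)=-\sigma(L_0(\lambda))$, i.e.\ $L_0$ is anti-twisted. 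This is exactly what the paper records when it states $\hat{\sigma}(L_0(\lambda))=\sigma(L_0(\varepsilon^{-1}\lambda))=-L_0(\lambda)$ and $\lambda^{3}L_0\in\Lambda SL(3,\C)_\sigma$ (the two statements are consistent precisely because $\epsilon^{3}=-1$). One can also see it directly from the displayed matrix for $L_0$: its $\lambda^{-2}$-coefficient is $A_{-1}^{2}$ (with $A_{-1}\in\mathcal{G}_{-1}$ the $\lambda^{-1}$-part of $D$), and this matrix does not sit in the $\epsilon^{-2}$-eigenspace of $\sigma$, as twistedness of $L_0$ would require.

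Because of this, the coefficient comparison you wrote down yields $\kappa_2(\epsilon\lambda)=+\kappa_2(\lambda)$, the opposite of the lemma, and the concluding appeal to ``the $\lambda$-weight of $L_0$ \dots\ via $\epsilon^{3}=-1$'' is not a derivation: it contradicts the identity you just asserted and is never made precise (the $\lambda^{\pm2}$ versus $\lambda^{\pm1}$ content by itself produces no sign). The correct argument is short once the equivariance of $L_0$ is fixed: from $X(\epsilon\lambda)=\sigma(X(\lambda))$, $D(\epsilon\lambda)=\sigma(D(\lambda))$ and $L_0(\epsilon\lambda)=-\sigma(L_0(\lambda))$, linear independence of $\sigma(D(\lambda))$ and $\sigma(L_0(\lambda))$ gives $\kappa_1(\epsilon\lambda)=\kappa_1(\lambda)$ and $\kappa_2(\epsilon\lambda)=-\kappa_2(\lambda)$ at once. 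So you must redo the step establishing how $L_0$ transforms, using the actual form of $\sigma$ (including its outer part); the remainder of your outline can then stand as written. (For comparison: the paper offers no detailed proof of this lemma, only the rank-two observation that the commutant is spanned by $D$ and one further matrix, so the burden of the sign bookkeeping is exactly the point your write-up gets wrong.)
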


Hence,  the matrix $Q^{-1} U_+$ has the form
\begin{equation*}
Q^{-1}U_{+}=\exp(\beta_1 D+\beta_2 L_0),
\end{equation*}
where $\beta_1$ and $\beta_2$ are functions of $y$ and $\lambda$ near $0$.
Thus equation \eqref{eq:Equ2} leads to
\begin{equation*}\label{eq:beta12}
\beta_1^{\prime}D+\beta_2^{\prime}L_0=-Q^{-1}\frac{d}{dy}Q+2i Q^{-1}(V_0+\lambda V_1)Q.
\end{equation*}
Recalling  $Q=Q_0 \tilde{Q}$, we obtain
\begin{equation}\label{eq:beta12new}
\beta_1^\prime \tilde{Q}D+\beta_2^{\prime} \tilde{Q}L_0=(-Q_0^{-1} \frac{dQ_0}{dy}+2iQ_0^{-1}VQ_0)\tilde{Q}-\frac{d\tilde{Q}}{dy}.
\end{equation}

A direct computation shows
$$\tilde{Q}D=\frac{\lambda^3}{\kappa}\begin{pmatrix}
\lambda^{-1}b\check{q}-\lambda \bar{a}\check{v}_1 & -\lambda\bar{b}\check{p}+\lambda^{-1}a\check{v}_1&\lambda^{-1}a\check{p}-\lambda\bar{a}\check{q}\\
\lambda^{-1}b\check{t}-\lambda \bar{a}\check{v}_2 & -\lambda\bar{b}\check{s}+\lambda^{-1}a\check{v}_2&\lambda^{-1}a\check{s}-\lambda\bar{a}\check{t}\\
-\lambda \bar{a}\check{c}&\lambda^{-1}a \check{c}&0
\end{pmatrix},$$
$$L_0=\begin{pmatrix}
\frac{|a|^2-|b|^2}{3}&\lambda^{-2}a^2&\lambda^2 \bar{a}\bar{b}\\
\lambda^2\bar{a}^2&\frac{|a|^2-|b|^2}{3}&\lambda^{-2}ab\\
\lambda^{-2}ab&\lambda^2\bar{a}\bar{b}&-\frac{2}{3}(|a|^2-|b|^2)
\end{pmatrix},$$
where
$$\quad \lambda^3 L_0 \in \Lambda SL(3,\mathbb{C})_{\sigma},
 \hspace{2mm} \text{and} \hspace{2mm} \hat{\sigma}(L_0(\lambda)):=\sigma(L_0(\varepsilon^{-1}\lambda))=-L_0(\lambda)$$
and 
\begin{eqnarray*}
&&\tilde{Q}L_0=\frac{\lambda^3}{\kappa}\cdot 
\begin{pmatrix}
\frac{|a|^2-|b|^2}{3}\check{p}+\lambda^2 \bar{a}^2\check{q}+\lambda^{-2}ab \check{v}_1&
*
&*\\
\frac{|a|^2-|b|^2}{3}\check{s}+\lambda^2 \bar{a}^2\check{t}+\lambda^{-2}ab\check{v}_2&* &*\\
\lambda^{-2}ab\check{c}& \lambda^2\bar{a}\bar{b}\check{c}&-\frac{2}{3}(|a|^2-|b|^2)\check{c}
\end{pmatrix}.
\end{eqnarray*}

On the other hand, 
$$-Q_0^{-1}\frac{dQ_0}{dy}+2iQ_0 VQ_0=\begin{pmatrix}
0&-2\lambda \bar{\psi}a^2 e^{-2u}&0\\
0&0&-\frac{2i\lambda e^u}{a}\\
-\frac{2i\lambda e^u}{a}&0&0
\end{pmatrix},$$

and

$$(-Q_0^{-1}\frac{dQ_0}{dy}+2iQ_0 VQ_0)\tilde{Q}=
\frac{\lambda^3}{\kappa}
\begin{pmatrix}
-2\lambda \bar{\psi}a^2 e^{-2u}\check{s}&-2\lambda \bar{\psi} a^2 e^{-2u}\check{t}&-2\lambda\bar{\psi}a^2e^{-2u}\check{v}_2\\
0&0&-\frac{2i\lambda e^u}{a}\check{c}\\
-\frac{2i\lambda e^u}{a}\check{p}&-\frac{2i\lambda e^u}{a}\check{q}&-\frac{2i\lambda e^u}{a}\check{v}_1
\end{pmatrix},
$$
and 
$$\frac{d\tilde{Q}}{dy}=\frac{\lambda^3}{\kappa} (\check{Q}^{\prime}+\frac{\frac{2}{3} \lambda^3 e^u[(u^{\prime})^2+u^{\prime\prime}]}
{\lambda^6\bar{\psi}-\psi-\lambda^3 e^u u^{\prime}} \check{Q}).$$

Substituting this into \eqref{eq:beta12new}  we obtain  $9$ equations for $\beta_1^{\prime}$ and $\beta_2^{\prime}$. 
In particular, 
we obtain
\begin{eqnarray}
&-\beta_1^{\prime}\lambda \bar{a} +\beta_2^{\prime}\lambda^{-2}ab=-\frac{2i\lambda e^u}{a}\frac{\check{p}}{\check{c}}\label{eq:31}\\
&\beta_1^{\prime} \lambda^{-1}a+\beta_2^{\prime}\lambda^2 \bar{a}\bar{b}=-\frac{2i\lambda e^u}{a}\frac{\check{q}}{\check{c}}\label{eq:32}.
\end{eqnarray}
Solving \eqref{eq:31} and \eqref{eq:32} and integrating yields
\begin{equation}\label{eq:equiv_beta}
\begin{split}
\beta_1(y)&= \int_0^y \frac{2i \lambda^3\bar{\psi} - iu^{\prime} e^u}{\lambda^3\bar{\psi}-\lambda^{-3}\psi-e^u u^{\prime}} ds, \\
\beta_2(y) &= \int_0^y \frac{2e^u}{\lambda^3\bar{\psi}-\lambda^{-3}\psi-e^u u^{\prime}}ds. 
\end{split}
\end{equation}
Since we already assume $u^{\prime}\neq 0$,
we find that the solutions $\beta_1^{\prime}$ and $\beta_2^{\prime}$ also satisfy the other $7$ equations. 

Putting everything together we obtain

\begin{theorem} [Explicit Iwasawa decomposition]
The extended frame $\F$, satisfying $\F(0,\lambda) = I$, for the translationally equivariant minimal Lagrangian surface in $\mathbb{C}P^2$ generated by the  potential $D(\lambda)dz$  with vanishing diagonal, and satisfying $ab \neq 0,$ is given by
\begin{equation*}\label{eq:extended frame F}
\mathbb{F}(z,\lambda)=\exp(zD-\beta_1(y,\lambda) D-\beta_2(y,\lambda) L_0) Q^{-1}(y,\lambda),
\end{equation*}
with $\beta_1, \beta_2$ as in \eqref{eq:equiv_beta} and $Q=Q_0\tilde{Q}$ as in \eqref{eq:Q_0}, \eqref{eq:checkQ}, \eqref{eq:pqstvc}, \eqref{eq:Q} and $u$ as in  \eqref{sol}.
\end{theorem}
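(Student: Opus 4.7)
The plan is to assemble the extended frame $\mathbb{F}$ from the pieces already built in the previous subsections. Starting from the holomorphic potential $\eta = D(\lambda)\,dz$ and the solution $C(z,\lambda) = \exp(zD)$ of $dC = C\eta$ with $C(0,\lambda) = I$, I want to recover $\mathbb{F}$ via the Iwasawa factorization. The relation \eqref{equiF}, namely $\mathbb{F}(x+iy,\lambda) = e^{(x+iy)D(\lambda)} U_+(y,\lambda)^{-1}$, reduces the problem to identifying $U_+(y,\lambda)$ explicitly, and this is exactly what has been prepared: Step 1 (equation \eqref{eq:Equ1}) produced a matrix $Q = Q_0\tilde{Q}$ conjugating $D$ to $\Omega$, and Step 2 (equation \eqref{eq:Equ2}) determined the residual freedom in $U_+$.

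First I would record that once $Q$ satisfies $QDQ^{-1} = \Omega$, any other solution $U_+$ of \eqref{eq:Equ1} must differ from $Q$ on the right by an element commuting with $D$, so $U_+ = QE$ with $[E,D] = 0$. For generic $\lambda$ the matrix $D(\lambda)$ is regular semisimple (Remark \ref{rem:six lambda}), hence its commutant is two-dimensional, spanned by $D$ and $L_0 = D^2 - \tfrac{1}{3}\mathrm{tr}(D^2)I$, so $E = \exp(\beta_1 D + \beta_2 L_0)$ for some scalar functions $\beta_1(y,\lambda)$, $\beta_2(y,\lambda)$. Inserting this into \eqref{eq:Equ2} and using $Q = Q_0\tilde{Q}$ produces a $3\times 3$ matrix equation whose off-diagonal entries \eqref{eq:31}--\eqref{eq:32} uniquely determine $\beta_1',\beta_2'$ as in \eqref{eq:equiv_beta}; the paper has also verified that the remaining seven entries are automatically satisfied as a consequence of \eqref{eq:equiv1}--\eqref{eq:equiv2}. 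Integrating against the initial condition $\beta_1(0,\lambda) = \beta_2(0,\lambda) = 0$ (which corresponds to $U_+(0,\lambda) = Q(0,\lambda) = I$) gives the explicit formulae stated.

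Putting this together, $U_+(y,\lambda)^{-1} = \exp(-\beta_1 D - \beta_2 L_0)\, Q^{-1}(y,\lambda)$, and substituting into \eqref{equiF} yields
\[
\mathbb{F}(z,\lambda) = e^{zD}\exp\!\bigl(-\beta_1 D - \beta_2 L_0\bigr)\,Q^{-1}(y,\lambda).
\]
Since $L_0$ is a polynomial in $D$, all three matrices $zD$, $\beta_1 D$ and $\beta_2 L_0$ commute pairwise, so the two exponentials may be merged into $\exp(zD - \beta_1 D - \beta_2 L_0)$, giving precisely the formula claimed. The initial condition $\mathbb{F}(0,\lambda) = I$ follows from $\beta_j(0,\lambda) = 0$ together with $Q(0,\lambda) = Q_0(0,\lambda)\tilde{Q}(0,\lambda) = I$, which was arranged by the choice $a = ie^{u(0)/2}$ and the normalization \eqref{eq:Q}.

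The main obstacle, and the part that most deserves care, is to justify that the object produced in this way really is the Iwasawa factor, i.e.\ that the right-hand side factor $U_+ = QE$ genuinely lies in $\Lambda^+ SL(3,\mathbb{C})_\sigma$: one must check that $U_+(y,\lambda)$ extends holomorphically in $\lambda$ across $\lambda = 0$, has determinant one, and satisfies the twisting condition $\sigma(U_+(\varepsilon\lambda)) = U_+(\lambda)$. The normalization factor in \eqref{eq:Q} has been chosen precisely so that $\tilde{Q}$ is holomorphic on a small disk about $\lambda = 0$ and has unit determinant; the twisting for the combined matrix $QE$ follows from the twisting of $D$ and the parity $\hat{\sigma}(L_0) = -L_0$ recorded above, provided $\beta_1$ is even and $\beta_2$ is odd in $\lambda$ under $\lambda \mapsto \varepsilon\lambda$, which can be read off directly from \eqref{eq:equiv_beta}. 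With these checks, uniqueness of the Iwasawa splitting (Theorem \ref{Thm:Iwasawa}) identifies the constructed object with $\mathbb{F}$.
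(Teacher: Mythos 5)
Your main line of argument is the same as the paper's, and it is correct: starting from \eqref{equiF} you write the genuine plus-factor as $U_+=QE$ with $[E,D]=0$, invoke the regular semisimplicity of $D(\lambda)$ (Remark \ref{rem:six lambda}) and the commutant lemma to write $E=\exp(\beta_1 D+\beta_2 L_0)$, determine $\beta_1,\beta_2$ from \eqref{eq:Equ2} via \eqref{eq:31}--\eqref{eq:32} with $\beta_j(0,\lambda)=0$, and merge the commuting exponentials to get the stated formula with $\F(0,\lambda)=I$.

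The only problematic part is your final paragraph. The parity you claim cannot be ``read off'' from \eqref{eq:equiv_beta}: $\beta_2$ depends on $\lambda$ only through $\lambda^{\pm 3}$, and under $\lambda\mapsto\varepsilon\lambda$ the sign change of $\lambda^{\pm 3}$ does not yield $\beta_2(\varepsilon\lambda)=-\beta_2(\lambda)$ because of the $e^{u}u^{\prime}$ term in the denominator; indeed the paper explicitly notes that $Q$ was computed without regard to the twisting condition and that $\mathcal{E}$ may be an untwisted loop matrix --- only the product $U_+=QE$ is twisted. More importantly, this verification (twisting, holomorphy at $\lambda=0$, determinant one) is not needed to close the argument: in the derivation you already gave, $U_+$ is not a candidate to be identified with the Iwasawa factor via uniqueness; it \emph{is} the Iwasawa factor furnished by Theorem \ref{essential-transfo} and \eqref{equiF}, and the computation merely exhibits its explicit form where the individual factors $Q$ and $E$ make sense (small $\lambda$, suitable $y$). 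That $U_+$ itself is globally defined then follows from the global Iwasawa decomposition (Theorem \ref{Thm:Iwasawa}), exactly as the paper's remark following the theorem states. So either drop that paragraph or rephrase it along these lines; as written, the twisting check you propose would not go through.
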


\begin{remark}
In the proof of the last theorem we have derived the equation
 $U_{+}=Q\exp(\beta_1 D+\beta_2 L_0).$
In this equation each separate term is only defined for small $\lambda$ and a possibly restricted set of $y'$s. However, due to the globality and the uniqueness of the Iwasawa splitting, the matrix $U_+$ is defined for all $\lambda$ in $\C$ and all $z \in \C$.
\end{remark}

\subsection{Explicit expressions for minimal Lagrangian immersions}

We know from Remark \ref{rem:six lambda} that 
except for special cases (which we have excluded)  the matrix $D$ has three different eigenvalues. 
Since $D$ is skew-Hermtian, the corresponding eigenvectors are automatically perpendicular. Therefore there exists a unitary matrix $L$ such that 
$D=L\mathrm{diag}(i d_1,id_2, id_3) L^{-1}$.
As a consequence, for the extended lift $F$ we thus obtain
$$ F = \F e_3=L\exp(z\Lambda-\beta_1\Lambda-\beta_2(\Lambda^2-\frac{\mathrm{tr}\Lambda^2}{3}I))L^{-1}Q^{-1}e_3,$$
where $\Lambda=\mathrm{diag}(i d_1, i d_2, id_3)$. 
Set $ L= ( l_1,l_2,l_3)$. 
Altogether we have shown
\begin{theorem}[\cite{DorfMa1}] Every translationally equivariant minimal Lagrangian immersion generated by the potential $D(\lambda) dz$ has a canonical lift 
$F = F(z,\lambda)$ of the form
\begin{equation}\label{F-loop}
\begin{split}
F(z,\lambda)=&
\sum_{j=1}^3 \exp\{ i z d_j (\lambda) - i \beta_1(y, \lambda) d_j(\lambda) + \beta_2(y,\lambda))
 (d_j(\lambda)^2 - \frac{2\beta}{3} )\}\\
 &\quad\quad\quad
\langle Q^{-1}e_3, l_j\rangle l_j.
\end{split}
\end{equation}
\end{theorem}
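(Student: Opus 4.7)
The plan is to derive this formula as a direct spectral decomposition of the already–established explicit Iwasawa formula
$$\mathbb{F}(z,\lambda)=\exp(zD-\beta_1 D-\beta_2 L_0)\,Q^{-1}(y,\lambda),$$
applied to the standard basis vector $e_3$, since the canonical horizontal lift is $F=\mathbb{F} e_3$. So the whole argument reduces to diagonalizing the matrix $zD-\beta_1 D-\beta_2 L_0$ simultaneously in terms of an eigenframe of $D(\lambda)$, and then evaluating $L^{-1}Q^{-1}e_3$ in that frame.

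First I would invoke Remark \ref{rem:six lambda}: for all but finitely many $\lambda$, the matrix $D(\lambda)$ is regular semi-simple, and since $D\in\Lambda su(3)_\sigma$ is skew-Hermitian with simple spectrum $\{id_1,id_2,id_3\}$, its eigenvectors form an orthonormal basis. This produces a unitary $L=(l_1,l_2,l_3)$ with $D=L\Lambda L^{-1}$ and $\Lambda=\mathrm{diag}(id_1,id_2,id_3)$. Because $L$ is unitary, $L^{-1}=L^*$ and for any vector $w$ the identity $L\,\mathrm{diag}(\mu_1,\mu_2,\mu_3)L^{-1}w=\sum_{j}\mu_j\langle w,l_j\rangle l_j$ holds.

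Next I would compute the eigenvalues of $L_0=D^2-\tfrac{1}{3}\mathrm{tr}(D^2)I$ in the same basis. Since $\mathrm{tr}(D^2)=\sum_j(id_j)^2=-\sum_j d_j^2$, and the relations $d_1+d_2+d_3=0$, $d_1d_2+d_2d_3+d_3d_1=-\beta$ from subsection above give $\sum d_j^2=2\beta$, one obtains $\mathrm{tr}(D^2)=-2\beta$, so that the $j$-th eigenvalue of $L_0$ is $(id_j)^2+\tfrac{2\beta}{3}=-\bigl(d_j^2-\tfrac{2\beta}{3}\bigr)$. Combining, the eigenvalue of $zD-\beta_1 D-\beta_2 L_0$ corresponding to $l_j$ is
$$iz\,d_j-i\beta_1\,d_j+\beta_2\Bigl(d_j^2-\tfrac{2\beta}{3}\Bigr).$$

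Finally I would exponentiate using the diagonalization and set $w:=Q^{-1}(y,\lambda)e_3$. Applying the identity quoted above yields
$$F(z,\lambda)=\mathbb{F}(z,\lambda)e_3=\sum_{j=1}^{3}\exp\!\Bigl\{izd_j(\lambda)-i\beta_1(y,\lambda)d_j(\lambda)+\beta_2(y,\lambda)\bigl(d_j(\lambda)^2-\tfrac{2\beta}{3}\bigr)\Bigr\}\,\langle Q^{-1}e_3,l_j\rangle\,l_j,$$
which is exactly \eqref{F-loop}. The result then extends to the exceptional values of $\lambda$ (where $D(\lambda)$ degenerates) by continuity in $\lambda$, since both sides are real-analytic. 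The only genuine subtlety I anticipate is the bookkeeping of the sign of $\mathrm{tr}(D^2)$ and keeping the $(d_j^2-2\beta/3)$ term with the correct sign after conjugation of $L_0$; everything else is formal spectral calculus applied to the previous theorem.
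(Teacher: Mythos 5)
Your proposal is correct and follows essentially the same route as the paper: the authors likewise diagonalize $D=L\Lambda L^{-1}$ with $L$ unitary (using Remark \ref{rem:six lambda} and skew-Hermitian symmetry), note that $L_0$ has eigenvalues $-(d_j^2-\tfrac{2\beta}{3})$ since $\mathrm{tr}(D^2)=-2\beta$, and expand $F=\F e_3=L\exp(z\Lambda-\beta_1\Lambda-\beta_2(\Lambda^2-\tfrac{\mathrm{tr}\Lambda^2}{3}I))L^{-1}Q^{-1}e_3$ in the eigenbasis to obtain \eqref{F-loop}. Your sign bookkeeping for the $\beta_2$ term is exactly right, and the continuity remark for degenerate $\lambda$ is harmless extra care.
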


Along the ideas of the paper \cite{CU94} by Castro-Urbano we have obtained in \cite{DorfMa1} 
\begin{theorem}
\begin{enumerate}
\item When the cubic differential $\lambda^{-3}\Psi$ of an translationally equivariant minimal Lagrangian immersion $f$ is not real, the canonical lift  $F$ of $f$ has  the form 
\begin{equation}\label{eq:F(x,y,lambda)}
F(x,y, \lambda)= \sum_{j=1}^3 h_j(y)e^{id_j x +iG_j(y)} \hat{l}_j,
\end{equation}
where 
\begin{equation}\label{eq:h_j G_j}
h_j(y)=\left(\frac{d_je^u-\mathrm{Re}(\lambda^{-3}\psi)}{d_j^3-\mathrm{Re}(\lambda^{-3}\psi)}\right)^{\frac{1}{2}},
\quad\quad
G_j(y)=\int_0^y \frac{d_j \mathrm{Im}(\lambda^{-3}\psi)}{d_j e^u-\mathrm{Re}(\lambda^{-3}\psi)}ds.
\end{equation}
\item When $\lambda^{-3}\Psi$ is real, the canonical lift $F$ of $f$ has the form 
\begin{equation*}\label{eq:F-psi real}
F(x,y,\lambda)=\sum_{j=1}^3 \epsilon_j (\frac{\beta}{3}-e_j) \sqrt{\frac{e_j-\wp(y-\omega^{\prime})}{8|\psi|^2-(\frac{\beta}{3}-e_j)^2}} e^{id_j x} \hat{l}_j
\end{equation*} 
with $\epsilon_1=-1, \epsilon_2=\epsilon_3=1$.

\end{enumerate}
\end{theorem}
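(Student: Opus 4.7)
The plan is to take \eqref{F-loop} as the starting point and reduce it to the stated form by an explicit algebraic manipulation. First I write $z = x+iy$, so that $izd_j = id_jx - yd_j$; the factor $e^{id_jx}$ survives in the final expression and everything else in the $j$-th summand depends only on $y$ and $\lambda$. Setting $P := \mathrm{Re}(\lambda^{-3}\psi)$, $R := \mathrm{Im}(\lambda^{-3}\psi)$, the key observation is that on $S^1$ one has $\lambda^3\bar\psi - \lambda^{-3}\psi = -2iR$, and consequently the numerator of the $\beta_1$-integrand in \eqref{eq:equiv_beta} decomposes as $i(-2iR - e^u u') + 2iP$. Hence $\beta_1(y) = iy + 2iP\int_0^y ds/(-2iR - e^u u')$, and the $iy$-piece of $-i\beta_1 d_j$ cancels $-yd_j$ exactly, leaving the $j$-th exponent in \eqref{F-loop} in the form
\[
E_j = id_jx + 2\int_0^y \frac{Pd_j + (d_j^2-\tfrac{2\beta}{3})e^u}{-2iR - e^u u'}\,ds.
\]

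The second step is to combine this integral with $\log\langle Q^{-1}(y,\lambda)e_3, l_j\rangle$, computed from the block-triangular structure in \eqref{eq:checkQ}--\eqref{eq:Q}, and show that their sum equals $\log h_j(y) + iG_j(y)$ up to an additive constant. Differentiating in $y$ reduces this to a single complex identity:
\[
\frac{2[Pd_j + (d_j^2-\tfrac{2\beta}{3})e^u]}{-2iR - e^u u'} + \frac{d}{dy}\log\langle Q^{-1}e_3, l_j\rangle = \frac{h_j'}{h_j} + iG_j'(y).
\]
The key algebraic inputs are the characteristic-polynomial identity $d_j^3 - \beta d_j + 2P = 0$ (recorded after Remark \ref{rem:six lambda}), the first integral \eqref{eq:equiv1}, and $|\psi|^2 = P^2 + R^2$. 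After clearing the common denominator $(-2iR - e^u u')(d_je^u - P)$, the identity becomes a polynomial identity in $u, u', e^u, d_j, P, R$ that one verifies mechanically via those three relations. The additive constants are fixed by the initial condition $Q(0,\lambda) = I$; absorbing them yields $\hat{l}_j := \langle e_3, l_j\rangle\, l_j / h_j(0)$ and gives \eqref{eq:F(x,y,lambda)}.

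For part (2), $\lambda^{-3}\Psi \in \R$ means $R \equiv 0$, so the integrand of $G_j$ is identically zero and $G_j \equiv 0$. The denominator $d_j^3 - P = \beta d_j - 3P$ is constant in $y$, and $h_j^2 = (d_je^u - P)/(\beta d_j - 3P)$ with $e^u = \beta/6 - \wp(y-\omega')/2$ from \eqref{sol}. Translating this into the form of the theorem uses that, for $P = \pm|\psi|$, the eigenvalue cubic $d^3 - \beta d + 2P = 0$ and the Weierstrass cubic $4v^3 - g_2 v - g_3 = 0$ are related by a resolvent-type linear substitution: one finds that $d_j^3 - P$ is a multiple of $8|\psi|^2 - (\beta/3 - e_j)^2$ and $d_je^u - P$ is a multiple of $(\beta/3 - e_j)^2 (e_j - \wp(y-\omega'))$. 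The signs $\epsilon_1 = -1,\ \epsilon_2 = \epsilon_3 = 1$ are dictated by consistent choices of real square-root branches on the real interval $\wp(y-\omega') \in [e_3, e_1]$.

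The main obstacle is the polynomial identity in the second paragraph: the logarithmic derivative of $\langle Q^{-1}e_3, l_j\rangle$ is a rational function of the entries $\check p, \check q, \check s, \check t, \check c$ in \eqref{eq:pqstvc}, and showing the collapse to the clean right-hand side $h_j'/h_j + iG_j'$ requires a long computation with repeated use of $d_j^3 = \beta d_j - 2P$. As a cleaner alternative I would try first, one can sidestep the integration altogether by verifying the ansatz \eqref{eq:F(x,y,lambda)} directly against the Maurer--Cartan equations for $\F^{-1}\F_z$ and $\F^{-1}\F_{\bar z}$ given in Section \ref{Sec:trans}, which reduces the entire proof to checking two first-order ODEs for $h_j$ and $G_j$ that are immediate from their definitions \eqref{eq:h_j G_j}.
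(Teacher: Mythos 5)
Your route is genuinely different from the paper's: in the text this theorem is not proved at all, but imported from \cite{DorfMa1}, where the lift is derived by Castro--Urbano's separation-of-variables ansatz; the present paper only remarks, after the statement, that the loop-group expression \eqref{F-loop} and the form \eqref{eq:F(x,y,lambda)} can be checked to agree up to a constant unit factor (the phases $\alpha_j$ relating $l_j$ and $\hat{l}_j$), and even that check is only indicated at $y=2\omega$. Your plan --- to deduce \eqref{eq:F(x,y,lambda)} directly from \eqref{F-loop} --- would make the theorem self-contained in this paper, and your first step is correct: on $S^1$ one has $\lambda^3\bar\psi-\lambda^{-3}\psi=-2i\,\mathrm{Im}(\lambda^{-3}\psi)$, so $\beta_1(y)=iy+2i\,\mathrm{Re}(\lambda^{-3}\psi)\int_0^y\frac{ds}{-2i\,\mathrm{Im}(\lambda^{-3}\psi)-e^uu^{\prime}}$ and the $-yd_j$ term cancels, leaving exactly the exponent $E_j$ you write.

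However, the heart of the argument is missing. In your scheme the entire content of part (1) is the identity matching $\langle Q^{-1}(y,\lambda)e_3,l_j\rangle\,e^{E_j-id_jx}$ with $h_j(y)e^{iG_j(y)}$ up to a constant, and this is not merely a rational identity in the entries \eqref{eq:pqstvc}: the quantity $\langle Q^{-1}e_3,l_j\rangle$ also involves the components of the eigenvector $l_j$ and the cube-root factor $\kappa$ of \eqref{eq:Q}, so one must either compute $\bar{l}_j^{\,t}Q^{-1}e_3$ in closed form or eliminate $l_j$ using $QDQ^{-1}=\Omega$; none of this is carried out, nor is the needed fact $|\langle e_3,l_j\rangle|=h_j(0)$ (without it your $\hat{l}_j:=\langle e_3,l_j\rangle l_j/h_j(0)$ need not be a unit vector and the leftover constant need not be a pure phase). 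Your ``cleaner alternative'' is also not immediate: $F=\mathbb{F}e_3$ alone does not satisfy a first-order system --- $\partial_yF$ involves the other two frame columns --- so verifying the ansatz against the Maurer--Cartan equations requires reconstructing $e^{-u/2}F_z$ and $e^{-u/2}F_{\bar z}$ in the $\hat{l}_j$ basis, which is essentially the Castro--Urbano computation of \cite{DorfMa1}, not ``two ODEs immediate from the definitions.'' Finally, part (2) as sketched does not go through as stated: with $\frac{\beta}{3}-e_j=2a_j$, where $a_j$ are the roots of \eqref{w-Wequation}, and $d_j=\pm|\psi|/a_j$, the real-$\psi$ limit of \eqref{eq:h_j G_j} gives $h_j^2=(e^u-a_j)/(\beta-3a_j)$, and the claimed proportionality to $(\frac{\beta}{3}-e_j)^2(e_j-\wp(y-\omega^{\prime}))/\bigl(8|\psi|^2-(\frac{\beta}{3}-e_j)^2\bigr)$ fails identically (a short computation with $|\psi|^2=\beta a_j^2-2a_j^3$ shows the two agree only for special roots), so this step requires genuine work on the correspondence and normalization rather than a generic ``resolvent-type substitution.''
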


Since both $l_j$ and $\hat{l}_j$ are orthonormal eigenvectors of $D(\lambda)$ with respect to the eigenvalue $id_j$ and are independent of $z$, there exists a phase factor $\alpha_j$ which is also independent of $z$ such that $l_j = \hat{l}_j  \alpha_j$. Then we can check straightforwardly at the point $y=2\omega$ that the two horizontal lifts  \eqref{F-loop}, obtained by using the loop group method  and \eqref{eq:F(x,y,lambda)} obtained by using the idea of Castro-Urbano, are the same up to a constant factor of length $1$.

We can also prove the following theorem directly, which appears as Theorem 7.1 in \cite{DorfMa1}.

\begin{theorem}
For every translation $z\mapsto z+p + i m2\omega$  for $p\in \R$, $m\in \Z$, and $j = 1,2,3$,  the equation
\begin{equation*}
G_j(2\omega,\lambda)+\mathrm{Re}\beta_1(2\omega, \lambda) d_j(\lambda)
+\mathrm{Im}\beta_2(2\omega,\lambda)(-d_j(\lambda)^2+\frac{2\beta}{3}) = 0.
\end{equation*}
holds.
\end{theorem}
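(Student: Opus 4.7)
The plan is to compare the two distinct representations of the canonical horizontal lift $F$ of a translationally equivariant minimal Lagrangian immersion: the loop-group expansion \eqref{F-loop} and the Castro--Urbano expansion \eqref{eq:F(x,y,lambda)}. Since both are horizontal lifts of the same immersion $f$, they agree up to a single global unimodular constant $\mu$. Writing $l_j = \hat{l}_j \alpha_j$ with $|\alpha_j|=1$ and using that for generic $\lambda$ the eigenvectors $\hat l_j$ of $D(\lambda)$ are linearly independent, we match coefficients of $\hat l_j$ in the two expansions. Splitting $izd_j = ixd_j - yd_j$ (valid since $d_j\in\R$) and cancelling the common factor $e^{ixd_j}$, we obtain the pointwise identity
$$e^{-yd_j}\exp\!\Bigl\{-i\beta_1(y,\lambda)d_j + \beta_2(y,\lambda)(d_j^2-\tfrac{2\beta}{3})\Bigr\}\langle Q^{-1}(y,\lambda)e_3, l_j\rangle\alpha_j = \mu\,h_j(y)\,e^{iG_j(y,\lambda)},$$
valid for every $y\in\R$.

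Next, I would establish the relevant periodicities. By the properties listed after \eqref{sol}, $u$ (and hence $e^u$ and $u'$) is $2\omega$-periodic; the explicit formulas \eqref{eq:Q_0}, \eqref{eq:checkQ}, \eqref{eq:pqstvc}, \eqref{eq:Q} then force $Q(y,\lambda)$ to be $2\omega$-periodic in $y$, and \eqref{eq:h_j G_j} gives the same for $h_j$. Inspecting the integrands in \eqref{eq:equiv_beta} and \eqref{eq:h_j G_j}, each depends on $y$ only through $u$ and $u'$ and is therefore $2\omega$-periodic, yielding
$$\beta_k(y+2\omega,\lambda) = \beta_k(y,\lambda) + \beta_k(2\omega,\lambda),\qquad G_j(y+2\omega,\lambda) = G_j(y,\lambda) + G_j(2\omega,\lambda).$$
Substituting $y\mapsto y+2\omega$ into the pointwise identity and dividing by the identity at $y$, all $y$-dependent factors cancel and we are left with
$$e^{-2\omega d_j}\exp\!\Bigl\{-i\beta_1(2\omega,\lambda)d_j + \beta_2(2\omega,\lambda)(d_j^2-\tfrac{2\beta}{3})\Bigr\} = e^{iG_j(2\omega,\lambda)}.$$

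Taking a complex logarithm and separating real and imaginary parts, the real part yields a compatibility relation between $2\omega$, $\mathrm{Im}\,\beta_1(2\omega)$ and $\mathrm{Re}\,\beta_2(2\omega)$, while the imaginary part produces exactly
$$G_j(2\omega,\lambda) + \mathrm{Re}\,\beta_1(2\omega,\lambda)\,d_j(\lambda) + \mathrm{Im}\,\beta_2(2\omega,\lambda)\bigl(-d_j(\lambda)^2+\tfrac{2\beta}{3}\bigr) \equiv 0 \pmod{2\pi}.$$
The extra $p$ and $im2\omega$ translations in the statement affect neither side of this identity, which depends only on the fixed period $2\omega$.

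The main obstacle is eliminating the $2\pi\Z$ ambiguity inherited from the logarithm: the left-hand side is continuous in $\lambda$ on the open set of regular semisimple values of $D(\lambda)$ and takes values in $2\pi\Z$, so it is locally constant. To pin this constant to zero I would track a continuous branch of the logarithm from $y=0$, where $Q(0,\lambda)=I$ and $\beta_1(0)=\beta_2(0)=G_j(0)=0$, and verify directly that no $2\pi$-winding is accumulated along $[0,2\omega]$. Once this branch bookkeeping is settled, the identity falls out of the comparison of the two explicit formulas for $F$ together with the $2\omega$-periodicity of the metric.
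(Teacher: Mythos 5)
Your strategy is genuinely different from the paper's: the paper does not compare the two lift formulas at all, but proves the identity by brute force, splitting $\beta_1,\beta_2$ into real and imaginary parts, combining $G_j+\mathrm{Re}\beta_1\, d_j+\mathrm{Im}\beta_2(-d_j^2+\tfrac{2\beta}{3})$ into a single integral, and then using the algebraic relation $d_j^3-\beta d_j+2\mathrm{Re}(\lambda^{-3}\psi)=0$ together with $w''+12w^2-4\beta w=0$ (for $w=e^u$) to recognize the integrand as a total derivative; the result is the explicit antiderivative $\tfrac{1}{3}\arctan\bigl(w'(y)/(2\mathrm{Im}(\lambda^{-3}\psi))\bigr)$, which vanishes at $y=2\omega$ because $w'(2\omega)=0$. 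Your comparison argument is attractive in principle, but as written it has a genuine gap: matching the unimodular parts of the two expansions and quotienting by the period only yields the identity modulo $2\pi\Z$, and eliminating that integer is precisely the mathematical content of the theorem. Your proposed fix --- tracking a continuous branch from $y=0$ and ``verifying directly that no $2\pi$-winding is accumulated along $[0,2\omega]$'' --- is not carried out, and it is not a routine bookkeeping step: the winding in question is that of $\langle Q^{-1}(y,\lambda)e_3,l_j\rangle$ over a period (you would first have to show this quantity never vanishes on $[0,2\omega]$, and then bound the variation of its argument), which amounts to controlling exactly the same quantity the paper computes; the paper's closed form shows the combined phase stays in $(-\pi/6,\pi/6)$, which is what rules out any winding. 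Continuity in $\lambda$ does not rescue you either, because the locus $\mathrm{Im}(\lambda^{-3}\psi)=0$ disconnects the parameter circle and the limits of $\mathrm{Re}\beta_1(2\omega)$, $\mathrm{Im}\beta_2(2\omega)$, $G_j(2\omega)$ at those six points need not be zero (the integrals develop $1/\mathrm{Im}(\lambda^{-3}\psi)$-type singularities at the zeros of $w'$), so ``locally constant'' does not pin the constant to $0$.

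A secondary issue: your pointwise identity ``valid for every $y\in\R$'' at the relevant $\lambda$ is itself not free. The factor $Q=Q_0\tilde Q$ contains the fractional power $\kappa=(\lambda^6\bar\psi-\psi-\lambda^3 e^u u')^{2/3}(\lambda^6\bar\psi-\psi)^{1/3}$, and the paper's remark after the explicit Iwasawa theorem states explicitly that each separate term of $U_+=Q\exp(\beta_1 D+\beta_2 L_0)$ is only defined for small $\lambda$ and a possibly restricted range of $y$; only the product $U_+$ is globally defined. Since the theorem you are proving lives at $|\lambda|=1$ (where the $d_j$ are real and $\mathrm{Re}\beta_1$, $\mathrm{Im}\beta_2$ are the relevant quantities), you would need an additional continuation argument, and also a check that your branch of $\kappa$ (hence $Q$) is honestly $2\omega$-periodic in $y$ rather than periodic up to a cube root of unity, before the quotient-by-a-period step is legitimate. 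None of this is fatal to the idea, but as it stands the proposal defers the two essential difficulties rather than resolving them, whereas the paper's direct first-integral computation settles the exact (not mod $2\pi$) identity in a few lines.
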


\begin{proof}
Since 
\begin{equation}\label{eq:beta1}
\begin{split}
&\beta_1(y)=i\int_0^y \frac{e^u u^{\prime}-2\lambda^3 \bar{\psi}}{e^u u^{\prime}+2i \mathrm{Im}(\lambda^{-3}\psi)} ds\\
&=i \int_0^y \frac{(e^u u^{\prime})^2-2\mathrm{Re}(\lambda^{-3}\psi)e^{u}u^{\prime}+4i\lambda^3\bar{\psi}\mathrm{Im}(\lambda^{-3}\bar{\psi})}{(e^u u^{\prime} )^2+4[\mathrm{Im}(\lambda^{-3}\psi)]^2}ds\\
&=iy-2i\mathrm{Re}(\lambda^{-3}\psi)\int_0^y \frac{e^u u^{\prime}}{(e^u u^{\prime})^2+4[\mathrm{Im}(\lambda^{-3}\psi)]^2}ds\\
& \quad\quad\quad 
-4\mathrm{Im}(\lambda^{-3}\psi)\mathrm{Re}(\lambda^{-3}\psi) \int_0^y \frac{ds}{(e^u u^{\prime})^2+4[\mathrm{Im}(\lambda^{-3}\psi)]^2},
\end{split}
\end{equation}
and
\begin{equation}\label{eq:beta2}
\begin{split}
&\beta_2(y)=-\int_0^y \frac{2 e^u}{e^u u^{\prime}+2i \mathrm{Im}(\lambda^{-3}\psi)} ds\\
&=-2\int_0^y \frac{e^{2u} u^{\prime}}{(e^u u^{\prime} )^2+4[\mathrm{Im}(\lambda^{-3}\psi)]^2}ds+4i\mathrm{Im}(\lambda^{-3}\psi)\int_0^y\frac{e^u}{(e^u u^{\prime})^2 + 4[\mathrm{Im}(\lambda^{-3}\psi)]^2}ds,
\end{split}
\end{equation}
thus we have 
\begin{eqnarray*}
\mathrm{Re}\beta_1(y)&=&-4\mathrm{Im}(\lambda^{-3}\psi)\mathrm{Re}(\lambda^{-3}\psi) \int_0^y \frac{ds}{(e^u u^{\prime})^2+4[\mathrm{Im}(\lambda^{-3}\psi)]^2} ds,\\
\mathrm{Im}\beta_2(y)&=&4\mathrm{Im}(\lambda^{-3}\psi)\int_0^y\frac{e^u}{(e^u u^{\prime})^2 + 4[\mathrm{Im}(\lambda^{-3}\psi)]^2}ds.
\end{eqnarray*}
Hence,
\begin{eqnarray*}
&&G_1(y)+\mathrm{Re}\beta_1(y) d_1+\mathrm{Im}\beta_2(y)(-d_1^2+\frac{2\beta}{3})\\
&=&\mathrm{Im}(\lambda^{-3}\psi)\int_0^y\{ \frac{d_1}{d_1 e^u - \mathrm{Re}(\lambda^{-3}\psi)}
+\frac{-4d_1\mathrm{Re}(\lambda^{-3}\psi)+4(-d_1^2+\frac{2\beta}{3})e^u}{(e^u u^{\prime})^2+4[\mathrm{Im}(\lambda^{-3}\psi)]^2}\}ds.
\end{eqnarray*}
Denote $w(y)=e^{u(y)}$ as before. Recall that
\begin{equation*}
\begin{split}
&d_j^3-\beta d_j+2\mathrm{Re}(\lambda^{-3}\psi)=0,\\
&w^{\prime\prime}+12 w^2-4\beta w=0, \quad \text{ if } w^{\prime}\neq 0.
\end{split}
\end{equation*}
As a consequence, we have
\begin{equation*}
\begin{split}
&d_1 \{ (w^{\prime})^2+4 [\mathrm{Im}(\lambda^{-3}\psi)]^2\}+[-4d_1\mathrm{Re}(\lambda^{-3}\psi)+4(-d_1^2+\frac{2\beta}{3})w][d_1 w-\mathrm{Re}(\lambda^{-3}\psi)]\\
&=-8d_1 w^3 +4 (-d_1^3 +\frac{5\beta}{3}d_1)w^2-\frac{8}{3}\beta \mathrm{Re}(\lambda^{-3}\psi)w\\
&=\frac{2}{3}[ d_1 w-\mathrm{Re}(\lambda^{-3}\psi)] w^{\prime\prime}.
\end{split}
\end{equation*}
Therefore,
\begin{equation*}
\begin{split}
&G_1(y)+\mathrm{Re} \beta_1(y) d_1 +\mathrm{Im}\beta_2(y) (-d_1^2+\frac{2}{3})\\
&=\frac{2}{3}\mathrm{Im}(\lambda^{-3}\psi)\int_0^y\frac{w^{\prime\prime}}{(w^{\prime})^2+4[\mathrm{Im}(\lambda^{-3}\psi)]^2}ds\\
&=\frac{1}{3}\arctan \frac{w^{\prime}(y)}{2\mathrm{Im}(\lambda^{-3}\psi)},
\end{split}
\end{equation*}
and $$G_1(2\omega)+\mathrm{Re} \beta_1(2\omega) d_1 +\mathrm{Im}\beta_2(2\omega) (-d_1^2+\frac{2}{3})=0.$$
 The proof for the cases $j=2$ and $3$ is analogous.
\end{proof}

\section{Explicit expressions for $\beta_j$ and $G_j$ in terms of Weierstrass elliptic functions}
\label{Sec:quantities}

In this section, we present completely explicit expressions for the quantities $\beta_1(2\omega) $, $\beta_2(2\omega) $, and $G_j(2 \omega)$.

From \eqref{eq:beta1}  we obtain
\begin{equation}
\begin{split}
&\beta_1(2\omega)=2\omega i-16\mathrm{Im}(\lambda^{-3}\psi)\mathrm{Re}(\lambda^{-3}\psi) 
\int_0^{2\omega} \frac{dy}{[\wp^{\prime}(y-\omega^{\prime})]^2+16[\mathrm{Im}(\lambda^{-3}\psi)]^2},\\
&=2\omega i-16\mathrm{Im}(\lambda^{-3}\psi)\mathrm{Re}(\lambda^{-3}\psi) \int_0^{2\omega}\frac{dy}{4[\wp(y-\omega^{\prime})]^3-g_2 \wp(y-\omega^{\prime})-\tilde{g}_3},\label{eq:beta1=}
\end{split}
\end{equation}
where $\tilde{g}_3=g_3-16[\mathrm{Im}(\lambda^{-3}\psi)]^2$.

It is easy to derive from \eqref{Delta-e} that the cubic equation $4\wp^3-g_2 \wp-\tilde{g}_3=0$ has three distinct roots which we will  order to satisfy
$\tilde{e}_1>\tilde{e}_2>\tilde{e}_3$. Thus
\begin{equation}\label{eq:p-e}
\begin{split}
&\int_0^{2\omega} \frac{dy}{\wp(y-\omega^{\prime})^3-\frac{g_2}{4} \wp(y-\omega^{\prime})- \frac{\tilde{g}_3}{4}}\\
=&\int_0^{2\omega} [\frac{1}{\tilde{H}_1^2} \frac{1}{\wp(y-\omega^{\prime})-\tilde{e}_1}
+\frac{1}{\tilde{H}_2^2} \frac{1}{\wp(y-\omega^{\prime})-\tilde{e}_2}
+\frac{1}{\tilde{H}_3^2} \frac{1}{\wp(y-\omega^{\prime})-\tilde{e}_3}]dy,
\end{split}
\end{equation}
where 
\begin{equation}\label{H_i}
\tilde{H}_i^2:=(\tilde{e}_i-\tilde{e}_j)(\tilde{e}_i-\tilde{e}_k)
\end{equation}
for distinct $i,j,k\in\{1,2,3\}$.

If $\lambda^{-3}\psi$ is not real, then $\tilde{g}_3\neq g_3$. Denote $\wp(\alpha_j)=\tilde{e}_j$ for $j=1,2$ or $3$.
By using formula  (18.7.3)  of \cite{AS}
$$\int \frac{du}{\wp(u)-\wp(\alpha)}=\frac{1}{\wp^{\prime}(\alpha)}[\ln\frac{\sigma(u-\alpha_1)}{\sigma(u+\alpha)}+2u\zeta(\alpha)]$$
for $\wp(\alpha)\neq e_1, e_2$ or $e_3$,
and (18.2.20)
$$\sigma(z+2m\omega+2n\omega^{\prime})=(-1)^{m+n+mn}\sigma(z)\exp[(z+m\omega+n\omega^{\prime})(2m\eta+2n\eta^{\prime})],$$
where $\eta=\zeta(\omega)$, $\eta^{\prime}=\zeta(\omega^{\prime})$,
we obtain
\begin{equation}\label{eq:p-ej}
\int_0^{2\omega} \frac{dy}{\wp(y-\omega^{\prime})-\tilde{e}_j}
=\frac{1}{\wp^{\prime}(\alpha_j)}[4\omega \zeta(\alpha_j)-4\alpha_j \eta].
\end{equation} 
Here $\sigma(z)=\sigma(z;g_2,g_3)$ and $\zeta(z)=\zeta(z;g_2,g_3)$ are the Weierstrass $\sigma$-function and the Weierstrass $\zeta$-function, respectively. Therefore, substituting \eqref{eq:p-e}, \eqref{H_i} and \eqref{eq:p-ej} into \eqref{eq:beta1=} we derive
\begin{equation*}
\beta_1(2\omega)=2\omega i-4\mathrm{Im}(\lambda^{-3}\psi)\mathrm{Re}(\lambda^{-3}\psi)\sum_{j=1}^3\frac{1}{\tilde{H}_j^2}\frac{1}{\wp^{\prime}(\alpha_j)} [4\omega\zeta(\alpha_j)-4\alpha_j \eta]. 
\end{equation*}

Similarly,  from \eqref{eq:beta2} and \eqref{eq:p-ej}  we derive 
\begin{eqnarray*}
&&\beta_2(2\omega)=4i\mathrm{Im}(\lambda^{-3}\psi)\int_0^{2\omega}
\frac{\frac{\beta}{6}-\frac{\wp(y-\omega^{\prime})}{2}}{4[\wp(y-\omega^{\prime})]^3-g_2 \wp(y-\omega^{\prime})-\tilde{g}_3}dy,\\
&=&i\mathrm{Im}(\lambda^{-3}\psi) \sum_{j-1}^{3} \int_0^{2\omega}
 \frac{\frac{\beta}{3}-\tilde{e}_j}{2\tilde{H}_j^2}\frac{1}{\wp(y-\omega^{\prime})-e_j} dy\\
 &=&i\mathrm{Im}(\lambda^{-3}\psi) \sum_{j-1}^{3} \frac{\frac{\beta}{3}-\tilde{e}_j}{2\tilde{H}_j^2} \frac{1}{\wp^{\prime}(\alpha_j)}
 [4\omega \zeta(\alpha_j)-4\alpha_j \eta].
\end{eqnarray*}

Notice that actually $\frac{\beta}{3}+d_2d_3=\tilde{e}_1$ holds. From \eqref{eq:h_j G_j} and \eqref{eq:p-ej} we compute
\begin{eqnarray*}
G_1(2\omega)&=&2\mathrm{Im}(\lambda^{-3}\psi)\int_0^{\omega} \frac{1}{2e^{u(y)}+d_2d_3}dy\\
&=&-2\mathrm{Im}(\lambda^{-3}\psi) \int_0^{\omega} \frac{dy}{\wp(y-\omega^{\prime})-(\frac{\beta}{3}+d_2d_3)}\\
&=&-2\mathrm{Im}(\lambda^{-3}\psi) \frac{4\omega \zeta(\alpha_1)-4\alpha_1 \eta}{\wp^{\prime}(\alpha_1)}.
\end{eqnarray*} 
Similarly, we obtain
$$G_j(2\omega)=-2\mathrm{Im}(\lambda^{-3}\psi) \frac{4\omega \zeta(\alpha_j)-4\alpha_j \eta}{\wp^{\prime}(\alpha_j)}$$
for $j=2, 3$.

When $\lambda^{-3}\psi$ is real, it is obvious that $\beta_1(2\omega)=2\omega i$, $\beta_2(2\omega)=0$ and 
$G_j(2\omega)=0$ holds.

\section{Homogeneous minimal Lagrangian immersions into $\C P^2$}
\label{Sec:Hom}

In this section we will consider homogeneous minimal Lagrangian immersions into 
$\C P^2$ from the point of view of the loop group method.

\subsection{Basic results}

In our context we consider minimal Lagrangian immersions
for which the group of symmetries acts transitively.

More precisely, we consider minimal Lagrangian immersions 
$f:M \rightarrow \C P^2$, where $M$ is a Riemann surface,  and consider their group of symmetries
\begin{small}
\begin{equation*} \label{symmetrygroup}
\Gamma^f_\mathcal{S} = \lbrace (\gamma, R) \in  Aut(M) \times SU(3),
f(\gamma\cdot z) = R\cdot f(z) \hspace{2mm} \mbox{for all} \hspace{2mm}  z \in M \rbrace.
\end{equation*} 
\end{small}

We will also consider its group $\Gamma^f_M$ of projections onto the first component
\begin{equation*}
\begin{split}
\Gamma^f_M& = \lbrace \gamma \in Aut(M);  \mbox{there exists some } R \in SU(3)  \mbox{ such that } \\
&\hspace{16mm}  (\gamma,R) \in \Gamma^f_\mathcal{S} \rbrace.
\end{split}
\end{equation*}

It is easy to see that the following statements hold

\begin{lemma}
The group $\Gamma^f_\mathcal{S}$ is closed in the Lie group 
$Aut(M) \times SU(3)$ and the group $\Gamma^f_M $ is closed in $Aut(M)$.
In particular, both groups are Lie groups.
\end{lemma}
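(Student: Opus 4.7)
The plan is to verify the two closedness statements directly from the definitions, and then invoke Cartan's closed subgroup theorem to conclude that each is a Lie group.

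For $\Gamma^f_\mathcal{S}$, I would take an arbitrary sequence $(\gamma_n, R_n) \in \Gamma^f_\mathcal{S}$ converging to some $(\gamma, R) \in Aut(M) \times SU(3)$, and show that $(\gamma, R)$ again satisfies the intertwining relation $f(\gamma \cdot z) = R \cdot f(z)$. Fixing $z \in M$, on one side $R_n \cdot f(z) \to R \cdot f(z)$ because the action of $SU(3)$ on $\C P^2$ is continuous; on the other side $\gamma_n \cdot z \to \gamma \cdot z$ because evaluation $Aut(M) \times M \to M$ is continuous, and hence $f(\gamma_n \cdot z) \to f(\gamma \cdot z)$ by continuity of $f$. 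Passing to the limit in the identity $f(\gamma_n \cdot z) = R_n \cdot f(z)$ yields $(\gamma, R) \in \Gamma^f_\mathcal{S}$.

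For $\Gamma^f_M$, the projection of a closed subgroup is not automatically closed, so I would use the crucial fact that $SU(3)$ is compact. Given $\gamma_n \in \Gamma^f_M$ with $\gamma_n \to \gamma$ in $Aut(M)$, pick $R_n \in SU(3)$ so that $(\gamma_n, R_n) \in \Gamma^f_\mathcal{S}$. By compactness of $SU(3)$, after passing to a subsequence we may assume $R_n \to R$ for some $R \in SU(3)$. Then $(\gamma_n, R_n) \to (\gamma, R)$ in $Aut(M) \times SU(3)$, and the first part of the lemma gives $(\gamma, R) \in \Gamma^f_\mathcal{S}$, whence $\gamma \in \Gamma^f_M$.

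Finally, both $Aut(M) \times SU(3)$ and $Aut(M)$ are Lie groups (the former since $Aut(M)$ is a Lie group by the standard classification of automorphism groups of Riemann surfaces used in Theorem \ref{class-equi}, and the product of Lie groups is a Lie group), so the Cartan closed subgroup theorem immediately promotes the closed subgroups $\Gamma^f_\mathcal{S}$ and $\Gamma^f_M$ to embedded Lie subgroups.

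The main (very mild) obstacle is the second closedness statement, because one must extract a limit for the $SU(3)$-component that is not \emph{a priori} available; compactness of $SU(3)$ is precisely what makes this work, and it is worth stating explicitly that the argument would fail for a non-compact target isometry group.
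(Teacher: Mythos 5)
Your argument is correct: continuity of $f$, of the evaluation map, and of the $SU(3)$-action gives closedness of $\Gamma^f_\mathcal{S}$, compactness of $SU(3)$ lets you extract the limit $R$ needed for closedness of the projection $\Gamma^f_M$, and Cartan's closed subgroup theorem then yields the Lie group structure. The paper itself offers no proof (it labels the lemma as easy to see), and what you wrote is exactly the standard argument being taken for granted, with the one genuinely relevant observation --- that compactness of the isometry group is what makes the projection closed --- correctly identified.
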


\begin{definition}
A minimal Lagrangian immersion $f:M \rightarrow \C P^2$ is homogeneous,
if the Lie group $\Gamma^f_M $ acts transitively in $M$.
\end{definition}

\begin{remark} 
(1) If a minimal Lagrangian immersion $f:M \rightarrow \C P^2$ is homogeneous, then it is clear that also its lift to the universal cover
$\tilde{M} $ of $M$ is homogeneous.

(2) We can replace, without loss of generality, all groups considered so far by their connected components containing the identity element $I$.

(3) Since homogeneity implies that the groups under consideration are Lie groups, a homogeneous minimal Lagrangian immersion is in particular equivariant.
\end{remark}

\subsection{Homogeneous minimal Lagrangian surfaces defined on 
simply-connected  Riemann surfaces}

As pointed out in $(3)$ of the remark above, the notion of \lq\lq homogeneous\rq\rq \, implies \lq\lq equivariant\rq\rq, but, obviously, is much stronger. This will show up clearly in the discussion below.

Since there are exactly three simply connected Riemann surfaces, namely the Riemann sphere $S^2$, the unit disk $\D$, and the complex plane $\C$, we will separate the discussion accordingly.

\subsubsection{ The case $ M = S^2$} 

The case of $S^2$ is very special, since then  the cubic differential $\psi$ on $S^2$ vanishes identically. As pointed out in \cite{DorfMa0}, this implies that the normalized potential of such an immersion is nilpotent and only depends on one function.
 As a consequence, the normalized potential can be assumed (almost everywhere, at least locally after some change of coordinates) to be constant, and, since $\psi =0$, to be nilpotent. Therefore, as stated explicitly in \cite{DorfMa0}, one can carry out the loop group method explicitly and obtains that the image of $f$ is contained in some isometric image of $\R P^2$.
The same result has been obtained before by classical differential geometric methods in \cite{EschenburgGT, Wang}. 

We thus obtain:

\begin{theorem}
Every minimal Lagrangian immersion $f:S^2 \rightarrow \C P^2$ is homogeneous and $f(S^2)$ is, up to isometries of $\C P^2$ contained in 
$\R P^2$.
\end{theorem}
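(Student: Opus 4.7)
The plan is to combine two ingredients: a degree-theoretic vanishing argument for the cubic Hopf differential on $S^2$, and then a concrete description of the resulting totally geodesic immersion together with the transitive action coming from $SO(3)\subset SU(3)$.

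First, I would observe that by the Codazzi equation \eqref{eq:Codazzi}, the cubic Hopf differential $\Psi = \psi\,dz^3$ is a globally defined holomorphic section of $K_{S^2}^{\otimes 3}$. Since $\deg K_{S^2}^{\otimes 3} = -6 < 0$, this section must vanish identically, so $\psi \equiv 0$ on $S^2$.  With $\psi = 0$, the Gauss equation \eqref{eq:mLsurfaces} reduces to $u_{z\bar z}+e^u = 0$, so the Gauss curvature satisfies $K = -u_{z\bar z}e^{-u} \equiv 1$; by exactly the argument given in Section \ref{Sec:equiv} (the norm square of the second fundamental form equals $2(1-K)=0$), the immersion $f$ is totally geodesic. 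Alternatively, one invokes directly Yau's theorem (\cite{Yau1974}), already cited in the paper, which asserts that any minimal Lagrangian immersion of $S^2$ into $\C P^2$ is totally geodesic and, up to isometries of $\C P^2$, is the standard immersion.

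Second, I would identify the image: the (connected) totally geodesic Lagrangian submanifolds of $\C P^2$ of dimension $2$ are, up to isometries of $\C P^2$, exactly the standard $\R P^2 \subset \C P^2$ (the fixed point set of complex conjugation on $\C P^2$). Therefore we may assume $f(S^2)\subset \R P^2$. Since $\R P^2$ is not simply connected while $S^2$ is, the immersion $f$ factors as the composition of the universal covering $\pi : S^2 \to \R P^2$ with the totally geodesic inclusion $\R P^2 \hookrightarrow \C P^2$.

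Third, I would establish homogeneity. The subgroup $SO(3)\subset SU(3)$ preserves $\R P^2$ and acts transitively on it. Through the spin covering $SU(2)\to SO(3)$, every $R\in SO(3)$ lifts to (two) elements $\gamma\in SU(2)\subset \mathrm{Aut}(S^2)$ with $\pi(\gamma\cdot z) = R\cdot\pi(z)$, and hence $f(\gamma\cdot z) = R\cdot f(z)$. Thus $(\gamma,R)\in \Gamma^f_{\mathcal{S}}$ for all $R\in SO(3)$, so $\Gamma^f_M\supset SU(2)$ acts transitively on $S^2$, and $f$ is homogeneous.

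The only non-routine step is the vanishing of $\psi$ (which, although standard, is what collapses the whole loop-group machinery to the nilpotent/trivial case); everything else is then a direct consequence of Yau's theorem and the well-known transitivity of $SO(3)$ on $\R P^2$. The passage from transitivity of $SO(3)$ on the image $\R P^2$ to transitivity of $\Gamma^f_M$ on the domain $S^2$ requires only the existence of the spin lift, so there is no real obstacle to anticipate.
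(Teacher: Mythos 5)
Your proposal is correct, but it follows a different route from the paper. The paper's argument stays inside the loop-group framework: after noting (as you do) that the holomorphic cubic differential must vanish on $S^2$, it invokes \cite{DorfMa0} to conclude that the normalized potential is nilpotent and essentially constant, and that carrying out the loop group method explicitly for such a potential forces the image into an isometric copy of $\R P^2$; the classical references \cite{EschenburgGT, Wang} are only cited as an alternative, and the homogeneity assertion is left implicit. You instead argue classically: the degree computation $\deg K_{S^2}^{\otimes 3}=-6<0$ gives $\psi\equiv 0$, the Gauss equation then gives $K\equiv 1$ and $S=2(1-K)=0$ (the same computation the paper uses in Section \ref{Sec:equiv} for the rotational case), or alternatively Yau's theorem, so $f$ is totally geodesic with image the standard $\R P^2$; and you then supply the homogeneity verification explicitly by lifting the transitive $SO(3)\subset SU(3)$ action on $\R P^2$ through the covering $S^2\to\R P^2$ to rotations of $S^2$. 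Your version buys self-containedness and actually proves the homogeneity claim, which the paper does not spell out; the paper's version buys consistency with its loop-group machinery (nilpotent normalized potential, Wu's formula) at the cost of an external citation. One small correction: since $\mathrm{Aut}(S^2)=PSL(2,\C)$, what $\Gamma^f_M$ contains is the rotation group $PSU(2)\cong SO(3)$ (the image of $SU(2)$), not $SU(2)$ itself; this does not affect transitivity, and the right statement of the lifting is that each isometry of $\R P^2$ induced by $SO(3)\subset SU(3)$ lifts to a rotation of $S^2$ because exactly one of the two lifts in $O(3)$ has determinant $1$.
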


\subsubsection{ The case $ M = \D$} 

In this case the group $\Gamma^f_M $ is a closed subgroup of  $Aut(\D) = SL(2, \R)$ which acts transitively on $\D$. Now it is easy to see, by performing a Levi decomposition of $\Gamma^f_M $ and then an Iwasawa decomposition of its semi-simple part that $\Gamma^f_M $ actually contains (up to conjugation) the group of upper triangular matrices $\Delta$ in $SL(2,\R)$  for which the diagonal elements are positive. But the image of  the connected solvable group $\Delta$ in $SU(3)$ under the monodromy representation needs to be unitary.
Therefore, the kernel of the monodromy representation has at least dimension $1$ and the image of $f$ would be one-dimensional.

\begin{theorem}
There does not exist any  homogeneous, minimal Lagrangian immersion  $f: \D \rightarrow \C P^2$ .
\end{theorem}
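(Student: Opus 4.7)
The plan is to show that the transitive action of $\Gamma^f_M$ on $\D$ forces the existence of a nontrivial one-parameter family of translations of $\mathbb{H}\cong\D$ along which $f$ must be constant, contradicting the immersion hypothesis.

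First I would identify the structure of $\Gamma^f_M \subset \mathrm{Aut}(\D) = PSL(2,\R)$. Being a closed subgroup acting transitively on the $2$-dimensional disk, its identity component has dimension at least $2$, and the classification of Lie subalgebras of $sl(2,\R)$ (via Levi decomposition followed by Iwasawa decomposition of the semisimple part) shows that, up to conjugation, $\Gamma^f_M$ contains the Borel subgroup $B = AN \cong \R_{>0}\ltimes\R$ acting on $\mathbb{H}\cong\D$ by the affine maps $z \mapsto e^s z + t$. The group $B$ is simply connected, solvable, and non-abelian; its commutator $[B,B]=N$ is the $1$-parameter translation subgroup $\{z\mapsto z+t : t\in\R\}$, whose orbits are horizontal lines.

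Next I would lift $B$ into $\Gamma^f_\mathcal{S}$. The projection $\pi_1 : \Gamma^f_\mathcal{S}\to\Gamma^f_M$ is a surjective Lie group morphism whose kernel $K_0$ is the pointwise $SU(3)$-stabiliser of $f(\D)$, and $K_0$ is normalised by $\pi_2(\Gamma^f_\mathcal{S})$. Lifting the one-parameter subgroups $A_s, N_t\subset B$ and projecting via $\pi_2$, I obtain $R_s = \exp(sW)$ and $S_t = \exp(tY)$ in $SU(3)$ with $W,Y\in su(3)$ satisfying
\[
f(e^s z) = R_s\, f(z), \qquad f(z+t) = S_t\, f(z).
\]
The conjugation identity $a_s n_t a_s^{-1} = n_{e^s t}$ in $B$, combined with the above equivariances, forces $R_s S_t R_s^{-1}\cdot S_{e^s t}^{-1}\in K_0$; differentiating successively in $t$ and $s$ at the origin then yields
\[
[W,Y] - Y \;\in\; \mathfrak{k}_0 \;:=\;\mathrm{Lie}(K_0)\;\subset\; su(3).
\]

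The heart of the argument is to show this relation forces $Y\in\mathfrak{k}_0$. Since $K_0$ is normalised by $R_s$, the subspace $\mathfrak{k}_0$ is $\mathrm{ad}(W)$-invariant, and its Killing-orthogonal complement $\mathfrak{k}_0^\perp$ is too (as the Killing form is $\mathrm{ad}(W)$-invariant). Because $su(3)$ is a compact real form, the Killing form is negative definite, so $\mathrm{ad}(W)$ restricted to $\mathfrak{k}_0^\perp$ is skew-symmetric with respect to a definite form; hence all of its eigenvalues are purely imaginary, and the real eigenvalue $1$ does not appear. It follows that the class of $Y$ in $su(3)/\mathfrak{k}_0\cong\mathfrak{k}_0^\perp$ vanishes, i.e. $Y\in\mathfrak{k}_0$. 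Therefore $S_t\in K_0$, so $f(z+t)=S_t f(z)=f(z)$ for every $z\in\mathbb{H}$ and every $t\in\R$, forcing $f$ to depend only on $\mathrm{Im}\,z$ and hence $\mathrm{rank}\,df\le 1$ everywhere, contradicting that $f$ is an immersion of a $2$-dimensional surface. The main obstacle I anticipate is the careful treatment of $K_0$: a naive attempt would lift $B$ to a connected subgroup $\tilde B\subset\Gamma^f_\mathcal{S}$ and declare $\pi_2(\tilde B)$ to be a connected solvable subgroup of $SU(3)$, hence a torus, and invoke abelianness directly; but the extension by $K_0$ can destroy solvability, so this shortcut is invalid in general. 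The remedy, as above, is to pass to $su(3)/\mathfrak{k}_0$ and use the spectral rigidity of $\mathrm{ad}(W)$ on a compact real form.
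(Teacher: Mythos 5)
Your proof is correct and follows essentially the same route as the paper: a Levi decomposition followed by an Iwasawa decomposition of the semisimple part to show that $\Gamma^f_M$ contains, up to conjugation, the Borel (affine) subgroup of $Aut(\D)$, and then compactness of $SU(3)$ to force the translation subgroup to act trivially on the image, so that $f$ could not be an immersion — the paper phrases this second step as the unitary image of the connected solvable group $\Delta$ under the monodromy representation forcing a kernel of dimension at least one. Your explicit handling of the stabilizer $K_0$, via the relation $[W,Y]-Y\in\mathfrak{k}_0$ and the purely imaginary spectrum of $\mathrm{ad}(W)$ on $su(3)$, is simply a more careful rendering of the same compactness argument that the paper leaves implicit.
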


\subsubsection{ The case $M = \C$}
This case is slightly more complicated. The basic result is

\begin{theorem}
If $f:\C \rightarrow \C P^2$ is a homogeneous minimal Lagrangian immersion, the the group  $\Gamma^f_M $ contains, up to conjugation,  the subgroup of all translations of $\C$.
\end{theorem}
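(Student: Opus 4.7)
The plan is to pass to the identity component (justified by the preceding remark), then show that $\Gamma^f_M$ must be abelian, and finally that any connected abelian subgroup of $\mathrm{Aut}(\C)$ acting transitively on $\C$ is, up to conjugation, the translation subgroup itself. Throughout, I use that $\mathrm{Aut}(\C) = \C^* \ltimes \C$ is solvable (a direct check shows $[\mathrm{Aut}(\C),\mathrm{Aut}(\C)] \subseteq \C$), so every closed subgroup is as well, and that transitivity on the two-real-dimensional manifold $\C$ forces $\dim_{\R}\Gamma^f_M \geq 2$.

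To obtain abelianness, I would consider the monodromy homomorphism $R : \Gamma^f_M \to PU(3)$ determined by $f(\gamma\cdot z) = R(\gamma) f(z)$. If $\gamma_t$ is a one-parameter subgroup of $\ker R$ with infinitesimal generator $X$, differentiating $f \circ \gamma_t = f$ at $t=0$ yields $df(X) \equiv 0$; since $f$ is an immersion this forces $X = 0$, so the identity component of $\ker R$ is trivial and $dR$ is injective on $\mathrm{Lie}(\Gamma^f_M)$. On the other hand, the closure of $R(\Gamma^f_M)$ inside $PU(3)$ is a compact connected solvable Lie group, hence contained in a maximal torus and in particular abelian. Therefore the image of $dR$ is an abelian subalgebra of $\mathfrak{pu}(3)$, and injectivity lifts abelianness to $\mathrm{Lie}(\Gamma^f_M)$; connectedness of $\Gamma^f_M$ then makes $\Gamma^f_M$ abelian.

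It remains to show that any connected abelian subgroup $\Gamma \subseteq \mathrm{Aut}(\C)$ acting transitively on $\C$ coincides, up to conjugation, with the translation subgroup $\C \subseteq \mathrm{Aut}(\C)$. Suppose $\Gamma$ contains some $\gamma_0 : z \mapsto az + b$ with $a \neq 1$; such a $\gamma_0$ has a unique fixed point $z_0 = b/(1-a)$. For every $\gamma \in \Gamma$ commutativity gives $\gamma_0(\gamma(z_0)) = \gamma(\gamma_0(z_0)) = \gamma(z_0)$, so $\gamma(z_0)$ is a fixed point of $\gamma_0$ and therefore equals $z_0$. Hence the orbit of $z_0$ is the singleton $\{z_0\}$, contradicting transitivity on $\C$. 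So every element of $\Gamma$ is a translation, and transitivity then forces $\Gamma$ to be the full translation subgroup.

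The main obstacle is the passage from $\Gamma^f_M$ being merely solvable to its being abelian. That step rests on the twin Lie-theoretic facts that $\ker R$ is discrete (powered by the immersion hypothesis) and that a compact connected solvable Lie group is abelian (maximal torus argument in $PU(3)$); once these are in place the final fixed-point dichotomy in $\mathrm{Aut}(\C)$ is elementary and does the rest.
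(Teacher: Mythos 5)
Your argument is correct, but it takes a genuinely different route from the paper. The paper never leaves $\mathrm{Aut}(\C)$: writing a generator as the affine vector field $z\mapsto uz+v$, transitivity forces the translation parts $v$ to fill all of $\C$; if every generator has $u=0$ one is done, and if some generator has $u\neq0$ one conjugates its fixed point to the origin and observes that its brackets with the other generators sweep out all infinitesimal translations, so the Lie algebra of $\Gamma^f_M$ contains them. Nothing about the target $\C P^2$ is used there, and only containment (after that conjugation) is obtained. You instead exploit the target geometry: the monodromy representation has discrete kernel because $f$ is an immersion, the closure of its image is a compact connected solvable subgroup of $PU(3)$, hence a torus, so the identity component of $\Gamma^f_M$ is abelian, and your fixed-point argument then identifies it with the full translation group. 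This buys a strictly stronger conclusion -- the identity component is exactly the translation group, with no conjugation needed, so no larger connected symmetry group can occur -- at the price of extra machinery; it is in fact the same mechanism the paper uses for the disk case, where unitarity of the monodromy kills the non-abelian part. Two points you should make explicit to match even the paper's level of rigor: the assignment $\gamma\mapsto R(\gamma)$ is well defined and continuous because no nontrivial element of $PU(3)$ fixes $f(M)$ pointwise (the fixed-point set of a nontrivial isometry is a union of isolated points and complex lines, which cannot contain a piece of a Lagrangian surface), and the identity component of a transitive closed subgroup still acts transitively on the connected manifold $\C$ (its orbits are open, hence equal to $\C$), which is what licenses the reduction you borrow from the paper's remark.
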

\begin{proof}
We note that any element $X$ of the Lie algebra of $Aut(\C)$ can be represented  in the form
\begin{equation*}
X = \begin{pmatrix}
u& v\\
0&0
\end{pmatrix}
\end{equation*}

Choosing the base point $z = 0$, then the transitivity of the action means that all $v \in \C$ will occur (with a certain $u = u(v) $).

If there is a transitive subgroup such that all elements of its Lie algebra have
$u=0$, then this subalgebra consists of translations only and the claim follows.

Assume now there exists some $X$ with $u \neq 0$. Then it is straightforward to see that there exists some $z_0 \in \C$ such that $\exp( tX)\cdot z_0 = z_0$ for all $t \in \R$, where $z_0$ does not depend on $t$.

Hence, after a change of the base point we can assume that $X$ is of the form $v =0$. But then, it is clear, that the commutators of such an $X$ with an arbitrary $Y$ of the Lie algebra of some transitive group under consideration form a two-dimensional Lie algbera and only consist of translations. Hence we obtain an abelian, transitive group of translations. 
\end{proof}

Now we obtain (also see \cite{EschenburgGT, Wang})

\begin{theorem}
Every homogeneous minimal Lagrangian immersion $f:\C \rightarrow \C P^2$ 
is isometrically isomorphic with the Clifford torus.
\end{theorem}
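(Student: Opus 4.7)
The plan is to combine the preceding theorem, which supplies translational symmetry in every direction of $\C$, with the earlier structural results for translationally equivariant immersions in order to force the metric to be flat and then invoke the known classification of flat minimal Lagrangian surfaces in $\C P^2$.

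First, by the preceding theorem, $\Gamma^f_M$ contains (up to conjugation) the full group of translations of $\C$. In particular $f$ is simultaneously translationally equivariant with respect to the one-parameter subgroup of real translations $z\mapsto z+t$ and the one-parameter subgroup of imaginary translations $z\mapsto z+it$. Applying the lemma from Section~\ref{Sec:trans} to the first subgroup shows that the conformal factor $e^{u}$ depends only on $y=\mathrm{Im}(z)$ and the cubic Hopf differential has a constant coefficient. Applying the same lemma (after the biholomorphism $z\mapsto -iz$, which conjugates translations in the $y$-direction to translations in the $x$-direction) to the second subgroup forces $e^{u}$ to depend only on $x=\mathrm{Re}(z)$. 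The two conclusions together imply that $u$ is constant on $\C$; in particular $u'\equiv 0$.

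Next I would invoke the remark immediately following equation \eqref{eq:equiv2}: the case $u\equiv \mathrm{const.}$ yields a flat surface, and any flat minimal Lagrangian surface in $\C P^2$ is, up to isometries of $\C P^2$, an open portion of the Clifford torus (this is the result of \cite{LOY} quoted there). Since $f$ is defined on all of $\C$, the image is not merely an open portion but the full Clifford torus up to isometry, and the induced metric of the Clifford torus is then the metric of $f$. Hence $f$ is isometrically isomorphic to the Clifford torus.

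The main obstacle is really the step in which one deduces that both partial dependences force $u$ to be constant; one has to make sure that the hypothesis of the translationally-equivariant lemma is genuinely symmetric in $x$ and $y$ — this is fine because the definition of translational equivariance and the resulting potential form are invariant under the coordinate rotation $z\mapsto -iz$, which merely permutes the roles of the two directions. Everything else is a direct appeal to results already established in Sections \ref{sec:vacuum} and \ref{Sec:trans}.
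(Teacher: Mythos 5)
Your argument is correct, but after the first step it diverges from the paper's proof. The opening move is the same: the paper also says that a homogeneous immersion is doubly equivariant and concludes at once that the metric and the cubic differential are constant, which is exactly what you derive (more explicitly) by applying the lemma of Section~\ref{Sec:trans} in the $x$-direction and again, after the rotation $z\mapsto -iz$, in the $y$-direction; your care in checking that the lemma's hypothesis is invariant under this coordinate rotation is justified and sound. Where you differ is in the second half: the paper stays entirely inside the loop group framework --- with $u$ and $\psi$ constant the Maurer--Cartan form $\F^{-1}d\F = X\,dz + \tau(X)\,d\bar z$ is constant, integrability gives $[X,\tau(X)]=0$, which forces $X_0=0$, so the generating potential is a vacuum, and Section~\ref{sec:vacuum} then identifies the immersion with the Clifford torus up to isometry and coordinate change. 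You instead quote the classical result of \cite{LOY} (cited in the Remark following \eqref{eq:equiv2}) that the flat case is, up to isometries, an open portion of the Clifford torus. Both routes work; the paper's has the advantage of being self-contained and of exhibiting the potential explicitly as a vacuum, in keeping with its loop-group program, while yours is shorter but outsources the classification to \cite{LOY} and therefore needs one more word at the end: to pass from ``open portion'' to the full Clifford torus you should note that $u$ constant on all of $\C$ makes the induced flat metric complete, so $f$ is a local isometry from a complete plane into the flat torus and hence a Riemannian covering onto it; as stated, ``since $f$ is defined on all of $\C$'' is a little too quick, though easily repaired.
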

\begin{proof}
Every homogeneous minimal Lagrangian immersion is a doubly equivariant immersion. Therefore the metric and the cubic differential both are constant. 
As a consequence, the Maurer-Cartan form of the frame $\F$ is constant.
Writing $\F^{-1} d \F = Xdz + \tau(X) d\bar{z}$, we observe that the 
integrability condition implies $ \lbrack X, \tau(X) \rbrack = 0$. Since $X$ is of the form $X = \lambda^{-1} X_{-1} + X_0^\prime$  and $\F(z, \bar{z}, \lambda) = \exp(z X) \exp( \bar{z} \tau(X))$ we see that the immersion is generated by the potential $X$. A straightforward computation shows  that
$ \lbrack X, \tau(X) \rbrack = 0$, which implies $X_0 = 0$. Therefore the potential
$X$ is a vacuum, whence the immersion is isometrically isomorphic to the Clifford torus (see Section \ref{sec:vacuum}).
\end{proof}

\begin{remark}
The classification of all homogeneous minimal surfaces and   the classification of all homogeneous surfaces in $\C P^2$  have been  discussed  in \cite{EschenburgGT, Wang}, respectively. Here we just consider the classification of homogeneous minimal Lagrangian surfaces using the  loop group method.
For the surfaces under consideration the proof is quite simple and direct. 
\end{remark}

\begin{acknow}
The second author would like to express her sincere gratitude to Professor Robert Conte for his guidance on elliptic functions.
\end{acknow}


\end{document}